\numberwithin{equation}{section}
\newtheorem{thm}{Theorem}[section]
\newtheorem{prop}[thm]{Proposition}
\newtheorem{lemma}[thm]{Lemma}
\newtheorem{cor}[thm]{Corollary}
\theoremstyle{definition}
\newtheorem{definition}[thm]{Definition}
\newtheorem*{example}{Example}
\newtheorem*{remark}{Remark}
\newcommand{\Z}{\mathbb{Z}}
\newcommand{\N}{\mathbb{N}}
\newcommand{\rt}{\sqrt}
\newcommand{\ra}{\rangle}
\newcommand{\la}{\langle}
\newcommand{\bd}{\begin{description}}
\newcommand{\ed}{\end{description}}
\newcommand{\ds}{\displaystyle}
\newcommand{\ord}{\operatorname{ord}}
\newcommand{\lcm}{\operatorname{lcm}}
\newcommand{\Sym}{\operatorname{Sym}}
\newcommand{\Alt}{\operatorname{Alt}}
\newcommand{\SL}{\operatorname{SL}}
\title[Asymptotics and congruences for partition functions]{Asymptotics and congruences for partition functions which arise from finitary permutation groups}
\date{\today}
\subjclass[2010]{11P82, 11P83}
\keywords{Partitions, finitary permutation groups, Ramanujan congruences}
\author{Tessa Cotron}
\address{605 Asbury Circle, Box 122042, Atlanta, GA 30322}
\email{tessa.cotron@emory.edu}
\author{Robert Dicks}
\address{605 Asbury Circle, Atlanta, GA 30322}
\email{rdicks@emory.edu}
\author{Sarah Fleming}
\address{1192 Paresky Center, Williams College, Williamstown, MA 01267}
\email{smf1@williams.edu}
\begin{document}
\begin{abstract}
In a recent paper, Bacher and de la Harpe study the conjugacy growth series of finitary permutation groups.  In the course of studying the coefficients of a series related to the finitary alternating group, they introduce generalized partition functions $p(n)_{\textbf{e}}$.  The group theory motivates the study of the asymptotics for these functions.  Moreover, Bacher and de la Harpe also conjecture over 200 congruences for these functions which are analogous to the Ramanujan congruences for the unrestricted partition function $p(n)$.  We obtain asymptotic formulas for all of the $p(n)_{\textbf{e}}$ and prove their conjectured congruences.
\end{abstract}
\maketitle

\section{Introduction}

In~\cite{bacher}, Bacher and de la Harpe study infinite permutation groups that are locally finite. If $X$ is a nonempty set, given a permutation $g$ of $X$, the \textit{support} of $g$ is $\sup(g)=\{x\in X : g(x)\neq x\}$. The \textit{finitary symmetric group of X}, $\Sym(X)$, is the group of permutations with finite support. Bacher and de la Harpe investigate the number theoretic properties of word lengths for such groups with respect to various generating sets of transpositions.

Given a group $G$ generated by a set $S$, for $g\in G$, the \textit{word length} $\ell_{G,S}(g)$ is  the smallest non-negative integer $n$ such that $g=s_1s_2\cdots s_n$ where $s_1,s_2,\ldots,s_n\in S\cup S^{-1}$. The smallest integer $n$ such that there exists $h$ in the conjugacy class of $g$ where $\ell_{G,S}(h)=n$ is called the \textit{conjugacy length} $\kappa_{G,S}(g)$.  Let $\gamma_{G,S}(n)\in\N\cup\{0\}\cup\{\infty\}$ denote the number of conjugacy classes in $G$ made up of elements $g$ where $\kappa_{G,S}(g)=n$ for $n\in\N$.

If the pair $(G,S)$ is such that $\gamma_{G,S}(n)$ is finite for all $n\in\N$, then Bacher and de la Harpe define the \textit{conjugacy growth series} as
\begin{equation}
C_{G,S}(q)=\sum_{n=0}^{\infty}\gamma_{G,S}(n)q^n.
\end{equation}
They also define the \textit{exponential rate of conjugacy growth} to be 
\begin{equation*}
H_{G,S}^{\text{conj}}=\limsup_{n\rightarrow\infty}\frac{\log\gamma_{G,S}(n)}{n}.
\end{equation*}
In the cases we study, the values of $H_{G,S}^{\text{conj}}$ are $0$; thus, we define the \textit{modified exponential rate of conjugacy growth} to be
\begin{equation}
\widetilde{H}_{G,S}^{\text{conj}}=\limsup_{n\rightarrow\infty}\frac{\log\gamma_{G,S}(n)}{\rt{n}}.
\end{equation}

Extending classical facts about finite symmetric groups, a natural bijection can be seen between the conjugacy classes of $\Sym(X)$ and sets of integer partitions. Motivated by their study of subgroups of $\Sym(X)$, Bacher and de la Harpe define \textit{generalized partition functions}.  Given a vector $\textbf{e}:=(e_1,e_2,\ldots,e_k)\in\Z^{k}$, the corresponding \textit{generalized partition function} $p(n)_{\textbf{e}}$ is defined as the coefficients of the power series
\begin{align}
\ds\sum_{n=0}^{\infty}p(n)_{\textbf{e}}q^n&=\prod_{m=1}^{k}\prod_{n=1}^{\infty}\frac{1}{(1-q^{mn})^{e_m}}=\prod_{n=1}^{\infty}\frac{1}{(1-q^{n})^{e_1}\cdots(1-q^{kn})^{e_k}}.
\end{align}
The function $p(n)_{\textbf{e}}$ can be interpreted as multi-partition numbers with constraints on the parts.

The group theory in~\cite{bacher} motivates the study of the asymptotics of these power series, and the classical work of Ramanujan motivates the study of their congruences.  Here, we briefly recall the classical theory of the partition function $p(n)$.

A \textit{partition} of a positive integer $n$ is a non-increasing sequence $\lambda:=(\lambda_1,\lambda_2,\ldots)$ such that $\sum_{j\geq 1}\lambda_j=n$.  The partition function $p(n)$ counts the number of partitions of $n$.  This function has been an important object of study both for its uses in number theory and combinatorics and in its own right.  The partition function has generating function
\begin{equation}\label{p}
\sum_{n=0}^{\infty}p(n)q^n=\prod_{n=1}^{\infty}\frac{1}{1-q^{n}}.
\end{equation}
This is the case of the generalized partition function with the vector $\textbf{e}=(1)$.  By Proposition 1 in~\cite{bacher}, this series~(\ref{p}) corresponds to $C_{\Sym(\N),S}(q)$ where $S\subset\Sym(\N)$ is a generating set such that $S_{\N}^{\text{Cox}}\subset S\subset T_{\N}$, where
\begin{equation*}
S_{\N}^{\text{Cox}}=\{(i,i+1):i\in\N\}
\end{equation*}  
is such that $(\Sym(\N), S_{\N}^{\text{Cox}})$ is a Coxeter system, and 
\begin{equation*}
T_{\N}=\{(x,y)\in\Sym(\N): x,y\in\N\text{ are distinct}\}
\end{equation*} 
is the conjugacy class of all transpositions in $\Sym(\N)$.  Therefore, the famous Hardy-Ramanujan asymptotic formula
\begin{equation}\label{asymptotic}
p(n)\sim\frac{e^{\pi\rt{2n/3}}}{4n\rt{3}}
\end{equation}
as $n\rightarrow\infty$ implies that the coefficients of the conjugacy growth series defined by the set $S$ above, $\gamma_{\Sym(\N),S}(n)$, approach the right-hand side of~(\ref{asymptotic}) as $n\rightarrow\infty$.  In particular, we then have that the modified exponential rate of conjugacy growth is given by
\begin{equation*}
\widetilde{H}_{\Sym(\N),S}=\pi\rt{\frac{2}{3}}.
\end{equation*}

Using Ingham's Tauberian Theorem~\cites{bringmann,ingham}, we derive an asymptotic formula for the generalized partition function $p(n)_\textbf{e}$ for any vector $\textbf{e}$ with nonnegative integer entries.  Given $\textbf{e}:=(e_1,e_2,\ldots,e_k)$, let $d:=\gcd\{m:e_m\neq 0\}$.  Note that $p(n)_{\textbf{e}}=0$ for all $n\geq0$ such that $d\nmid n$.  Define quantities $\beta$, $\gamma$, and $\delta$ by
\begin{equation}\label{beta}
\beta:=\beta(\textbf{e})=\sum_{n=1}^{k/d}ne_{dn},
\end{equation}
\begin{equation}\label{gamma}
\gamma:=\gamma(\textbf{e})=\sum_{n=1}^{k/d}e_{dn},
\end{equation}
and
\begin{equation}\label{delta}
\delta:=\delta(\textbf{e})=\sum_{n=1}^{k/d}\frac{e_{dn}}{n}.
\end{equation}
In terms of these constants, we obtain the following asymptotics.

\begin{thm}\label{as}
Assume the notation above.  Given a nonzero vector $\textbf{e}:=(e_1,e_2,\ldots,e_k)\in\Z^k$ where $e_m\geq 0$ for all $m$, as $n\rightarrow\infty$, we have that
\begin{equation*}
p(dn)_{\textbf{e}}\sim\frac{\lambda A^{\frac{1+\gamma}{4}}}{2\rt{\pi}n^{\frac{3+\gamma}{4}}}e^{2\rt{An}},
\end{equation*}
where
\begin{equation*}
\lambda:=\prod_{m=1}^{k}\left(\frac{m}{2\pi}\right)^{\frac{e_{dm}}{2}}
\end{equation*}
and
\begin{equation*}
A:=\frac{\pi^2\delta}{6}.
\end{equation*}
\end{thm}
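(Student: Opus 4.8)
The plan is to extract the behaviour of the generating function as $q\to 1^-$ and feed it into Ingham's Tauberian theorem. Since $e_m=0$ whenever $d\nmid m$, replacing $m$ by $dj$ in the defining product and writing $Q=q^d$ gives
\begin{equation*}
\sum_{n=0}^{\infty}p(dn)_{\textbf{e}}\,Q^n=\prod_{j=1}^{k/d}\prod_{n=1}^{\infty}\frac{1}{(1-Q^{jn})^{e_{dj}}},
\end{equation*}
so it suffices to prove the theorem for the vector $(e_d,e_{2d},\dots)$, whose corresponding gcd equals $1$; one checks directly that $\gamma$, $\delta$, and $\lambda$ are unchanged by this passage. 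I therefore assume $d=1$ and set $f(q)=\sum_{n\ge0}p(n)_{\textbf{e}}\,q^n=\prod_{m=1}^{k}\bigl(\prod_{n\ge1}(1-q^{mn})^{-1}\bigr)^{e_m}$.

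The one analytic input is the classical estimate
\begin{equation*}
\prod_{n=1}^{\infty}\frac{1}{1-e^{-tn}}\sim\sqrt{\frac{t}{2\pi}}\;e^{\pi^2/(6t)}\qquad(t\to0^+),
\end{equation*}
a standard consequence of the modular transformation $\eta(-1/\tau)=\sqrt{-i\tau}\,\eta(\tau)$ of the Dedekind eta function. Substituting $t\mapsto mt$ in the $m$-th inner product and raising to the power $e_m$,
\begin{equation*}
f(e^{-t})\sim\prod_{m=1}^{k}\left(\frac{mt}{2\pi}\right)^{e_m/2}e^{\,e_m\pi^2/(6mt)}=\lambda\,t^{\gamma/2}\,e^{A/t},
\end{equation*}
since $\sum_m e_m/m=\delta$, $\sum_m e_m=\gamma$, and $A=\pi^2\delta/6$ in the reduced case; that is, $f(e^{-t})\sim\lambda\,t^{\alpha}e^{A/t}$ with $\alpha=\gamma/2$. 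Ingham's Tauberian theorem~\cites{bringmann,ingham} — applicable to a power series with non-negative, eventually non-decreasing coefficients and radius of convergence $1$ — then yields
\begin{equation*}
p(dn)_{\textbf{e}}\sim\frac{\lambda}{2\sqrt{\pi}}\cdot\frac{A^{\frac{2\alpha+1}{4}}}{n^{\frac{2\alpha+3}{4}}}\,e^{2\sqrt{An}}=\frac{\lambda A^{\frac{1+\gamma}{4}}}{2\sqrt{\pi}\,n^{\frac{3+\gamma}{4}}}\,e^{2\sqrt{An}},
\end{equation*}
as claimed; the case $\textbf{e}=(1)$ recovers the Hardy--Ramanujan formula~(\ref{asymptotic}), a useful check on the constants.

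The step that needs care is the hypothesis of Ingham's theorem, namely that the relevant coefficients are eventually non-decreasing — without some Tauberian condition the conclusion can fail. When $e_d\neq0$ this is immediate: the reduced series $\sum_n p(dn)_{\textbf{e}}\,Q^n$ equals $(1-Q)^{-1}$ times a power series with non-negative coefficients, so its coefficients are partial sums and hence non-decreasing. When $e_d=0$ one instead builds, for all sufficiently large $n$, an injection from the multipartitions of $dn$ counted by $p(dn)_{\textbf{e}}$ into those of $d(n+1)$; after a short case analysis this is possible because $\gcd\{m:e_m\neq0\}=d$ lets one trade a bounded number of parts among the factors so as to raise the total by exactly $d$. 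I expect this monotonicity input, rather than the asymptotic estimate itself, to be the main obstacle.
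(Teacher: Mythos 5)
Your proposal follows the same route as the paper: reduce to $d=1$ via $q\mapsto q^d$, use the transformation $\eta(-1/\tau)=\sqrt{-i\tau}\,\eta(\tau)$ to obtain $f(e^{-t})\sim\lambda t^{\gamma/2}e^{A/t}$, and feed this into Ingham's Tauberian theorem. The analytic computation is correct and matches the paper's. Your treatment of the monotonicity hypothesis when the reduced vector has $e_1\neq 0$ --- writing the series as $(1-q)^{-1}$ times a series with non-negative coefficients, so that the coefficients are partial sums and hence non-decreasing --- is clean and, if anything, more explicit than the paper's own justification.

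The gap is in the remaining case, where the reduced vector has $e_1=0$. The injection you sketch does not work as described. Take $\textbf{e}=(0,1,1)$, so the $n$th coefficient is $c(n)=\sum_{2a+3b=n}p(a)p(b)$. There is no termwise way to ``trade a bounded number of parts'': the summand $p(n/2)p(0)$ present for even $n$ has no counterpart at $n+1$, and indeed $c(n)$ fails to be monotonic well beyond the first few terms ($c(12)=22>c(13)=13$, $c(18)=78>c(19)=57$, and so on). Eventual monotonicity here comes from the bulk of the sum near the maximizing pair $(a,b)\approx(3n/10,\,2n/15)$ finally overwhelming the endpoint terms, which is a genuinely asymptotic phenomenon rather than a short combinatorial case analysis. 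To be fair, the paper's own verification of this hypothesis (``weakly increasing on progressions that support the nonvanishing coefficients,'' hence eventually weakly increasing) is equally thin at exactly this point; but since you correctly isolate monotonicity as the crux, your proof is incomplete until you either supply a real argument for eventual monotonicity (e.g., a quantitative comparison of $c(n)$ and $c(n+1)$ using the known growth of the summands, or falling back on the circle method) or invoke a Tauberian theorem whose hypotheses you can actually check.
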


\begin{remark}
Using the circle method~\cite{apostol}, one can obtain stronger forms of Theorem~\ref{as} with explicit error terms.
\end{remark}

\begin{example}
Let $\textbf{e}=(1)$.  Then $d=1$, $\gamma=1$, and $\delta=1$, so $\lambda=\frac{1}{\rt{2\pi}}$ and $A=\frac{\pi^2}{6}$.  Then as $n\rightarrow\infty$, we have that
\begin{equation*}
p(n)_{(1)}\sim\frac{e^{\pi\sqrt{2 n/3}}}{4 n \sqrt{3}},
\end{equation*}
and our asymptotic coincides with~(\ref{asymptotic}).
\end{example}

\begin{example}
Let $\textbf{e}=(2)$.  Then $d=1$, $\gamma=2$, and $\delta=2$, so $\lambda=\frac{1}{2\pi}$ and $A=\frac{\pi^2}{3}$.  Then as $n\rightarrow\infty$, we have that
\begin{equation*}
p(n)_{(2)}\sim\frac{e^{2\pi\sqrt{n/3}}}{4\cdot 3^{\frac{3}{4}}n^{\frac{5}{4}}}.
\end{equation*}
Similarly, for $\textbf{e}=(0,1)$, as $n\rightarrow\infty$, we have that
\begin{equation*}
p(2n)_{(0,1)}\sim\frac{e^{\pi\rt{2 n/3}}}{4n\rt{3}}.
\end{equation*}
By Proposition 11 in~\cite{bacher}, it is known that
\begin{equation}
C_{\Alt(\N),S'}(q)=\frac{1}{2}\sum_{n=0}^{\infty}p(n)_{(2)}q^n+\frac{1}{2}\sum_{n=0}^{\infty}p(n)_{(0,1)}q^n,
\end{equation}
where $S'\subset\Alt(\N)$ is a generating set such that $S_{\N}^{A}\subset S'\subset T_{\N}^{A}$.  Here we have that
\begin{equation*}
S_{\N}^A:=\{(i, i+1, i+2)\in\Alt(\N) : i\in N\}
\end{equation*}
and 
\begin{equation*}
T_{\N}^A:=\cup_{g\in\Alt(\N)}g S_{\N}^A g^{-1}.
\end{equation*}
Then the coefficients of this series satisfy the asymptotic
\begin{equation*}
\gamma_{\Alt(\N),S'}(n)\sim \frac{e^{2\pi\rt{n/3}}}{3^{\frac{3}{4}}\cdot 8n^{\frac{5}{4}}} 
\end{equation*}
as $n\rightarrow\infty$.  Therefore, we have that 
\begin{equation*}
\widetilde{H}_{\Alt(\N),S'}=\frac{2\pi}{\rt{3}}.
\end{equation*}
\end{example}

In addition to finding generalized asymptotic formulas, we study generalized forms of Ramanujan's congruences including those conjectured by Bacher and de la Harpe in~\cite{bacher}.  The Ramanujan congruences are~\cite{berndt}:
\begin{align*}
p(5n+4)&\equiv 0\pmod 5\\
p(7n+5)&\equiv 0\pmod 7\\
p(11n+6)&\equiv 0\pmod {11}.
\end{align*}

Using the definition of generalized partition numbers, Bacher and de la Harpe define a \textit{generalized Ramanujan congruence} as:
\begin{enumerate} [label=(\roman*)]
\item a nonzero integer vector $\textbf{e}:=(e_1,e_2,\ldots,e_k)\in\Z^k$,
\item an arithmetic progression $(An+B)_{n\geq 0}$ with $A\geq 2$ and $1\leq B\leq A-1$, and
\item a prime power $\ell^f$ with $\ell$ prime and $f\geq 1$
\end{enumerate}
such that
\begin{equation*}
p(An+B)_{\textbf{e}}\equiv 0\pmod{\ell^f}
\end{equation*}
for all $n\geq 0$.  

\begin{remark}
In Theorem~\ref{as} the entries of the vector $\textbf{e}$ must be nonnegative, whereas here the entries of the vector $\textbf{e}$ are allowed to take on negative values.
\end{remark}

Bacher and de la Harpe conjecture 284 generalized Ramanujan congruences for $p(n)_{\textbf{e}}$.  They observe how the coefficients of conjugacy growth series satisfy congruence relations similar to the classic Ramanujan congruences for the partition function, and use these congruences to analyze the finitary alternating group.

There are two different types of congruences of the form $p(\ell n+B)_{\textbf{e}}\equiv 0\pmod\ell$ that appear in~\cite{bacher}.  In the first type, the value of $B$ is uniquely determined by the vector $\textbf{e}$. The second type consists of sets of congruences of the form $p(\ell n+B)_{\textbf{e}}\equiv 0\pmod\ell$ with varying values of $B$ using the same values of $\ell$ and $\textbf{e}$.  
One example of the first type is the conjectured congruence 
\begin{align*} 
&p(5n+2)_{(2,0,0,4)}\equiv 0\pmod 5
\end{align*}
for all $n\geq 0$.  An example of a set of the second type is the pair of conjectured congruences
\begin{align*}
&p(5n+2)_{(2,0,0,2)}\equiv p(5n+3)_{(2,0,0,2)}\equiv 0\pmod 5
\end{align*}
for all $n\geq 0$.

We offer an algorithm to determine the number of values of $p(n)_{\textbf{e}}$ that must be computed in order to guarantee a congruence.  Given a vector $\textbf{e}:=(e_1,e_2,\ldots,e_k)\in\Z^k$ and a prime $\ell\geq 5$, we construct a vector of nonnegative integers $\textbf{c}_{\textbf{e}}:=(c_1,c_2,\ldots,c_k)$.  Let $\textbf{e}':=\textbf{e}-\ell\textbf{c}_{\textbf{e}}$.  
We define
\begin{equation}\label{alpha}
\alpha:=\sum_{m=1}^{k}me_m
\end{equation}
and
\begin{equation}
\delta_{\ell}:=\begin{cases}
\frac{\alpha}{24}\pmod\ell & \ell\nmid 24\\
0 & \ell\mid 24
\end{cases}
\end{equation}where $\frac{1}{24}$ is taken as the multiplicative inverse of 24 $\pmod\ell$.
We then define 
\begin{equation}\label{w}
w:=-\frac{1}{2}\sum_{m=1}^{k}e_m',
\end{equation}
\begin{equation}\label{omega}
\omega:=\frac{1}{24}\sum_{m=1}^{k}me_m',
\end{equation}
and 
\begin{equation}\label{N}
N:=24 N_0\gcd(24,\sum_{m=1}^{k}\frac{N_0}{m}e_m')^{-1},
\end{equation}
where $N_0:=\lcm\{m : e_m'\neq 0\}$. The vector $\textbf{e}'$ that we construct satisfies the following conditions:
\begin{enumerate}[label=(\roman*)]
\item $e_m'\leq 0$ for all $m$,
\item $\omega\in\Z$,
\item $w\in\Z$, and 
\item $\sum_{m=1}^{k}\frac{N}{m}e_m'\equiv 0\pmod{24}$.
\end{enumerate}
We then define
\begin{equation}\label{K}
K_{\textbf{e}}:=\left\lfloor{\frac{w}{12}N\prod_{p\mid N}\left(1+\frac{1}{p}\right)}\right\rfloor+\frac{\omega-\delta_{\ell}}{\ell},
\end{equation}
where the product runs over all prime divisors of $N$.

Using this notation, we arrive at the following theorem:
\begin{thm}\label{type1}
Assume the notation above.  Let $\ell\geq 5$ be prime.  Then $p(\ell n+\delta_{\ell})_{\textbf{e}}\equiv 0\pmod\ell$ for all $n$ if and only if $p(\ell n+\delta_{\ell})_{\textbf{e}}\equiv 0\pmod\ell$ for all $0\leq n\leq K_{\textbf{e}}$.
\end{thm}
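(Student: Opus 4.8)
The forward implication is trivial; to prove the converse I would argue in three stages: reduce the problem modulo $\ell$ to the vector $\textbf{e}'$, realize the resulting series as $q^{\omega}$ times a holomorphic modular form on $\Gamma_{0}(N)$, and then extract the relevant progression with a Hecke operator and control it by Sturm's bound. \textbf{Stage 1 (reduction to $\textbf{e}'$).} Writing $\textbf{e}=\textbf{e}'+\ell\,\textbf{c}_{\textbf{e}}$ and using $(1-q^{mn})^{\ell}\equiv 1-q^{\ell mn}\pmod{\ell}$, one obtains
\begin{equation*}
\sum_{n\ge 0}p(n)_{\textbf{e}}\,q^{n}\equiv\Big(\sum_{n\ge 0}p(n)_{\textbf{e}'}\,q^{n}\Big)\,H(q^{\ell})\pmod{\ell},
\end{equation*}
where $H(q):=\sum_{n\ge 0}p(n)_{\textbf{c}_{\textbf{e}}}q^{n}$ is a unit in $(\Z/\ell\Z)[[q]]$ since $H(0)=1$. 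As $H(q^{\ell})$ is supported on exponents divisible by $\ell$, restricting this identity to exponents $\equiv\delta_{\ell}\pmod{\ell}$ expresses the numbers $p(\ell n+\delta_{\ell})_{\textbf{e}}$ in terms of the $p(\ell n+\delta_{\ell})_{\textbf{e}'}$ by a triangular transformation with $1$'s on the diagonal; hence for every $r\ge 0$ one has $p(\ell n+\delta_{\ell})_{\textbf{e}}\equiv 0\pmod{\ell}$ for all $0\le n\le r$ if and only if $p(\ell n+\delta_{\ell})_{\textbf{e}'}\equiv 0\pmod{\ell}$ for all $0\le n\le r$. So it suffices to prove the statement with $\textbf{e}$ replaced by $\textbf{e}'$.

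\textbf{Stage 2 (modularity).} With $q=e^{2\pi i z}$ we have $\sum_{n\ge 0}p(n)_{\textbf{e}'}q^{n}=q^{\omega}g(z)$, where $g(z):=\prod_{m=1}^{k}\eta(mz)^{-e_{m}'}$ and $\eta$ is Dedekind's eta function. All exponents $-e_{m}'$ are nonnegative, and conditions~(i)--(iv) together with the choice of $N$ in~\eqref{N} are precisely Ligozat's conditions for $g$ to be a holomorphic modular form: $g\in M_{w}(\Gamma_{0}(N),\chi)$ for a suitable nebentypus $\chi$, holomorphic at every cusp. Write $g=\sum_{n\ge-\omega}c(n)q^{n}$; this has integer coefficients with $c(-\omega)=1$. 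From~\eqref{alpha} and~\eqref{omega}, $24\omega\equiv\alpha\pmod{\ell}$, hence $\delta_{\ell}\equiv\omega\pmod{\ell}$ and $t:=(\delta_{\ell}-\omega)/\ell$ is a nonnegative integer; consequently $p(\ell n+\delta_{\ell})_{\textbf{e}'}=c(\ell(n+t))$ for all $n\ge 0$, while $c(\ell m)=0$ whenever $m<t$, since then $\ell m\le\ell(t-1)=\delta_{\ell}-\omega-\ell<-\omega$.

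\textbf{Stage 3 (Hecke operator and Sturm's bound).} Apply the Hecke operator $T_{\ell}$ (which is $U_{\ell}$ when $\ell\mid N$): then $T_{\ell}g\in M_{w}(\Gamma_{0}(N),\chi)$, and its $m$-th coefficient is $c(\ell m)+\chi(\ell)\ell^{w-1}c(m/\ell)\equiv c(\ell m)\pmod{\ell}$ because $w\ge 2$. Let $B:=\big\lfloor\tfrac{w}{12}N\prod_{p\mid N}(1+\tfrac{1}{p})\big\rfloor$ be the Sturm bound for $M_{w}(\Gamma_{0}(N),\chi)$. Using the relations from Stage~2 and $c(\ell m)=0$ for $m<t$, the condition that $p(\ell n+\delta_{\ell})_{\textbf{e}'}\equiv 0\pmod{\ell}$ for all $0\le n\le B-t$ is equivalent to $c(\ell m)\equiv 0\pmod{\ell}$ for all $0\le m\le B$, that is, to all coefficients of $T_{\ell}g$ of index at most $B$ vanishing modulo $\ell$; Sturm's theorem then forces $T_{\ell}g\equiv 0\pmod{\ell}$, whence $c(\ell m)\equiv 0\pmod{\ell}$ for every $m$, i.e.\ $p(\ell n+\delta_{\ell})_{\textbf{e}'}\equiv 0\pmod{\ell}$ for all $n$. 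Since $B-t=B+(\omega-\delta_{\ell})/\ell=K_{\textbf{e}}$ by~\eqref{K}, this proves the statement for $\textbf{e}'$, and Stage~1 returns it for $\textbf{e}$.

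The technical core lies in Stage~2 and in the final bookkeeping. One must show that the precise modulus $N$ of~\eqref{N}, including its $\gcd(24,\cdot)$ factor, together with~(i)--(iv), genuinely places $g$ in $M_{w}(\Gamma_{0}(N),\chi)$ \emph{holomorphic at every cusp} — this uses the exact order formula for eta-quotients at the cusps of $\Gamma_{0}(N)$ — and then that the floor in~\eqref{K} matches the index shift by $t$ so that Sturm's bound lands exactly on $K_{\textbf{e}}$. The other point to pin down is the use of $w\ge 2$, needed to discard the term $\chi(\ell)\ell^{w-1}c(m/\ell)$ when passing from $U_{\ell}g$ to $T_{\ell}g$ modulo $\ell$: one should check that $\textbf{c}_{\textbf{e}}$ can always be chosen so that $w\ge 2$ whenever $\textbf{e}'\ne 0$, the case $\textbf{e}'=0$ being immediate since then $\delta_{\ell}=0$ and both sides of the equivalence already fail at $n=0$.
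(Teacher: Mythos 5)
Your proposal is correct and follows essentially the same route as the paper's proof: reduce modulo $\ell$ to the vector $\textbf{e}'$, realize $\sum p(n)_{\textbf{e}'}q^n$ as $q^{\omega}$ times an eta-product in $M_w(\Gamma_0(N),\chi)$, apply the Hecke operator $T_\ell$, and invoke Sturm's theorem, with the index shift $(\omega-\delta_\ell)/\ell$ accounting for the difference between $B$ and $K_{\textbf{e}}$. The points you flag for verification (the triangular transfer of the finite check between $\textbf{e}$ and $\textbf{e}'$, holomorphy at the cusps, and $w\geq 2$ so that $\chi(\ell)\ell^{w-1}b(n/\ell)$ vanishes mod $\ell$) are handled more tersely or implicitly in the paper, and all do check out.
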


The second type of congruence conjectured by~\cite{bacher} relies on much of the same notation and machinery, but requires considering the Legendre symbol with respect to the prime $\ell$.  We define two sets as follows:
\begin{equation}\label{S1}
S_{+}:=\left\{\gamma_{\ell}\in\Z : \left(\frac{\gamma_{\ell}-\delta_{\ell}}{\ell}\right)=1\text{ and }0\leq\gamma_{\ell}\leq\ell-1\right\}
\end{equation}
and
\begin{equation}\label{S-1}
S_{-}:=\left\{\gamma_{\ell}\in\Z : \left(\frac{\gamma_{\ell}-\delta_{\ell}}{\ell}\right)=-1\text{ and }0\leq\gamma_{\ell}\leq\ell-1\right\}.
\end{equation}
We then define
\begin{equation}\label{K'}
K_{\textbf{e}}':=\left\lfloor{\frac{w}{12}N\ell^2\prod_{p\mid N\ell^2}\left(1+\frac{1}{p}\right)}\right\rfloor+\frac{\omega-\delta_{\ell}}{\ell},
\end{equation}
where the product runs over all prime divisors of $N\ell^2$.
\begin{thm}\label{type2}
Assume the notation above.  Let $\ell\geq 2$ be prime where if $\ell=2$ or $3$, $\alpha\equiv 0\pmod{\ell}$.  Then $p(\ell n+\gamma_{\ell})_{\textbf{e}}\equiv 0\pmod{\ell}$ for all $n$ and all $\gamma_{\ell}\in S_{+}$ (resp. $S_{-}$) if and only if $p(\ell n+\gamma_{\ell})_{\textbf{e}}\equiv 0\pmod{\ell}$ for all $0\leq n\leq K_{\textbf{e}}'$ and all $\gamma_{\ell}\in S_{+}$ (resp. $S_{-}$).
\end{thm}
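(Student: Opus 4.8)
The plan is to reduce the statement modulo $\ell$ to the vanishing of a single holomorphic modular form of weight $w$ on $\Gamma_0(N\ell^2)$, and then to apply Sturm's bound.

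\smallskip
\noindent\emph{Step 1: reduction to $\textbf{e}'$.} Using $(1-q^m)^{\ell}\equiv 1-q^{m\ell}\pmod\ell$ and $e_m=e_m'+\ell c_m$ (with $c_m\geq 0$), I would first write
\[
\sum_{n\geq 0}p(n)_{\textbf{e}}\,q^{n}\;\equiv\;\Bigl(\sum_{n\geq 0}p(n)_{\textbf{e}'}\,q^{n}\Bigr)\Bigl(\sum_{n\geq 0}p(n)_{\textbf{c}_{\textbf{e}}}\,q^{\ell n}\Bigr)\pmod\ell .
\]
The second factor is a power series in $q^{\ell}$ with constant term $1$, hence invertible modulo $\ell$ with inverse again a power series in $q^{\ell}$; comparing coefficients within a fixed residue class modulo $\ell$ then shows that, for any residue $\gamma$, the congruences $p(\ell n+\gamma)_{\textbf{e}}\equiv 0\pmod\ell$—for all $n$, or for $0\leq n\leq M$—are equivalent to the corresponding congruences for $\textbf{e}'$. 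So it suffices to treat $\textbf{e}'$.

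\smallskip
\noindent\emph{Step 2: the eta quotient.} From $\prod_{i\geq 1}(1-q^{mi})=q^{-m/24}\eta(m\tau)$ one gets $\sum_{n}p(n)_{\textbf{e}'}q^{n}=q^{\omega}f(\tau)$, where $f(\tau):=\prod_{m=1}^{k}\eta(m\tau)^{-e_m'}=\sum_{n}b(n)q^{n}$, so $b(n)=p(n+\omega)_{\textbf{e}'}$. Conditions (i)--(iv) on $\textbf{e}'$ are precisely the hypotheses of Ligozat's criterion for $f$ to be a modular form on $\Gamma_0(N)$: (ii) and (iv) are the two congruences modulo $24$, (iii) makes the weight $w$ an integer, $N$ is a multiple of every $m$ with $e_m'\neq 0$, and (i) forces the order of $f$ at each cusp to be nonnegative. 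So $f$ should lie in $M_w(\Gamma_0(N),\chi)$, holomorphic, with integer coefficients, for a quadratic character $\chi$. Since $\alpha\equiv 24\omega\pmod\ell$, one has $\omega\equiv\delta_\ell\pmod\ell$, so the progression $n\equiv\gamma_\ell\pmod\ell$ in the argument of $p(\cdot)_{\textbf{e}'}$ corresponds, after the shift by $\omega$, to the progression $m\equiv\gamma_\ell-\delta_\ell\pmod\ell$ in the index of $b$; as $\gamma_\ell$ ranges over $S_{+}$ (resp.\ $S_{-}$) this is exactly the set of $m$ with $\left(\tfrac m\ell\right)=1$ (resp.\ $-1$). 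Thus the theorem for $\textbf{e}'$ reduces to showing $b(m)\equiv 0\pmod\ell$ whenever $\left(\tfrac m\ell\right)=1$ (resp.\ $-1$).

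\smallskip
\noindent\emph{Step 3: projection and Sturm's bound.} For $\ell\geq 5$ I would set $h(\tau):=\tfrac12\bigl(f-(f\mid U_\ell)\mid V_\ell\ \pm\ f\otimes\bigl(\tfrac{\cdot}{\ell}\bigr)\bigr)$, with sign $+$ for $S_{+}$ and $-$ for $S_{-}$, and $f\otimes\bigl(\tfrac{\cdot}{\ell}\bigr)$ the twist of $f$ by the Legendre symbol. A short computation gives $h=\sum_{(\frac m\ell)=\pm 1}b(m)q^{m}$ (the halves cancel, so $h$ has integer coefficients), while the standard behaviour of $U_\ell$, $V_\ell$ and of twisting a level-$N$ form by a character of conductor $\ell$ places $h$ in $M_w(\Gamma_0(N\ell^2),\chi)$. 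The assertion to prove is then $h\equiv 0\pmod\ell$, which by Sturm's theorem is equivalent to the vanishing modulo $\ell$ of the coefficients $b(m)$ with $\left(\tfrac m\ell\right)=\pm 1$ and $m\leq \tfrac w{12}[\,\SL_2(\Z):\Gamma_0(N\ell^2)\,]=\tfrac w{12}N\ell^{2}\prod_{p\mid N\ell^2}\bigl(1+\tfrac1p\bigr)$. Writing $m+\omega=\ell n+\gamma_\ell$ with $\gamma_\ell\in S_{\pm}$ (here $\tfrac{\omega-\delta_\ell}{\ell}\in\Z$, since $\ell\mid\omega-\delta_\ell$), each such $b(m)$ equals $p(\ell n+\gamma_\ell)_{\textbf{e}'}$ for some $n$ with $0\leq n\leq K'_{\textbf{e}}$ or else vanishes trivially (when $m+\omega<0$); indeed $K'_{\textbf{e}}$ in \eqref{K'} is obtained from this Sturm bound together with the index shift $\tfrac{\omega-\delta_\ell}{\ell}$. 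Hence the vanishing of all $p(\ell n+\gamma_\ell)_{\textbf{e}}$ with $\gamma_\ell\in S_{\pm}$ and $0\leq n\leq K'_{\textbf{e}}$ forces $h\equiv 0\pmod\ell$, and then Step 1 yields the vanishing of all $p(\ell n+\gamma_\ell)_{\textbf{e}}$ with $\gamma_\ell\in S_{\pm}$; the converse is trivial. For $\ell\in\{2,3\}$ the hypothesis $\alpha\equiv 0\pmod\ell$ is what makes it possible to pick $\textbf{c}_{\textbf{e}}$ with $\omega\in\Z$ (so $\delta_\ell=0$ and the eta quotient is modular), and $h$ is formed without the $\tfrac12$: for $\ell=2$, $S_{-}=\varnothing$ and $h=f-(f\mid U_2)\mid V_2$, and for $\ell=3$ the twisted form is already integral.

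\smallskip
\noindent\emph{Main obstacle.} The hard parts will be: (a) verifying that the construction of $\textbf{c}_{\textbf{e}}$ genuinely produces a vector $\textbf{e}'$ meeting all of (i)--(iv), and that, with $N$ as in \eqref{N}, the cusp orders required by Ligozat's criterion are nonnegative; and (b)—the main one—tracking the level and character of $h$ accurately through $U_\ell$, $V_\ell$ and the Legendre twist so as to land in $M_w(\Gamma_0(N\ell^2),\chi)$ and no larger group, since this is what pins down the constant in $K'_{\textbf{e}}$. The remaining index bookkeeping, and the small-prime cases $\ell\in\{2,3\}$, are routine.
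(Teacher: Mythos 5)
Your proposal is correct and follows essentially the same route as the paper: reduce to $\textbf{e}'$ modulo $\ell$, realize the generating function as the eta quotient $q^{\omega}\prod_m\eta(mz)^{-e_m'}\in M_w(\Gamma_0(N),\chi)$, project onto the residue classes with $\left(\frac{n}{\ell}\right)=\pm1$ to get a form in $M_w(\Gamma_0(N\ell^2),\chi)$, and apply Sturm's bound with the index shift $\frac{\omega-\delta_\ell}{\ell}$. The only cosmetic difference is that you build the projection via $f-(f\mid U_\ell)\mid V_\ell$ combined with the Legendre twist, whereas the paper takes $\frac{1}{2}\bigl(g_{\psi_0}\pm g_{\psi_1}\bigr)$ with $\psi_0$ the trivial character mod $\ell$ — these are the same forms and land in the same space.
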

Using Theorems~\ref{type1} and~\ref{type2}, we arrive at the following corollary:
\begin{cor}\label{cor}
All of the conjectured congruences in~\cite{bacher} are true.
\end{cor}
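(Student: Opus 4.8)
The plan is to derive Corollary~\ref{cor} directly from Theorems~\ref{type1} and~\ref{type2}, each of which converts an infinite family of divisibility statements into a single finite computation. First I would sort the $284$ conjectured congruences of~\cite{bacher} into the two types described above. A conjecture $p(\ell n+B)_{\textbf{e}}\equiv 0\pmod{\ell^f}$ in which $B$ is the single residue class attached to $\textbf{e}$ is an instance of Theorem~\ref{type1}, once one checks that $\ell\ge 5$ and that the conjectured offset $B$ equals $\delta_{\ell}$; a conjectured family of congruences with fixed $\textbf{e}$ and $\ell$ but several offsets $B$ is an instance of Theorem~\ref{type2}, once one checks that the hypothesis on $\ell\in\{2,3\}$ holds when relevant and that the conjectured set of offsets is exactly $S_{+}$ or $S_{-}$ as defined in~\eqref{S1}--\eqref{S-1}.

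For each vector $\textbf{e}$ appearing in the list I would then run the algorithm preceding Theorem~\ref{type1}: construct the auxiliary vector $\textbf{c}_{\textbf{e}}$, set $\textbf{e}':=\textbf{e}-\ell\textbf{c}_{\textbf{e}}$, and confirm that $\textbf{e}'$ satisfies conditions (i)--(iv); then compute $\alpha$, $\delta_{\ell}$, $w$, $\omega$, $N_0$, $N$, and finally the explicit bounds $K_{\textbf{e}}$ of~\eqref{K} and $K_{\textbf{e}}'$ of~\eqref{K'}. Theorems~\ref{type1} and~\ref{type2} now reduce each conjectured congruence to checking that the coefficients $p(n)_{\textbf{e}}$ with $n\le \ell K_{\textbf{e}}+\delta_{\ell}$ (respectively with $n\le \ell K_{\textbf{e}}'+(\ell-1)$, the offsets in $S_{\pm}$ lying in $\{0,\dots,\ell-1\}$) are divisible by the relevant modulus. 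These coefficients are read off by expanding the finite product $\prod_{m}\prod_{n\ge 1}(1-q^{mn})^{-e_m}$ modulo $\ell$ (or the relevant prime power) up to the required power of $q$, and the divisibilities are then verified directly; I would perform this in a computer algebra system and record the resulting data so the verification is reproducible.

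For any conjectured modulus that is a proper prime power $\ell^f$, I would note that the eta-quotient and Sturm/filtration estimates underlying Theorems~\ref{type1} and~\ref{type2} are insensitive to replacing $\ell$ by $\ell^f$ in the Hecke-operator bounds, so the same finite verification applies with $K_{\textbf{e}}$ (resp.\ $K_{\textbf{e}}'$) unchanged; alternatively such a congruence may descend from its mod-$\ell$ counterpart by an elementary divisibility argument. In every case the assertion collapses to finitely many coefficient computations.

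The main obstacle is organizational rather than conceptual: one must correctly match each of the many conjectured congruences to the precise hypotheses of Theorem~\ref{type1} or~\ref{type2} — above all, verifying that the conjectured offset is exactly $\delta_{\ell}$, or that the conjectured family of offsets coincides exactly with $S_{+}$ or $S_{-}$, and that the constructed $\textbf{e}'$ genuinely satisfies (i)--(iv) — and then expand generating functions to precision on the order of $\ell K_{\textbf{e}}$, which can be sizeable because of the factor $N\prod_{p\mid N}\left(1+\tfrac{1}{p}\right)$ in~\eqref{K}. Ensuring that this bookkeeping is exhaustive, so that no conjecture is skipped or mis-typed, is where the real care is needed.
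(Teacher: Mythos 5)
Your proposal matches the paper's approach exactly: the corollary is obtained by sorting each conjectured congruence into the Theorem~\ref{type1} or Theorem~\ref{type2} case, running the algorithm of Proposition~\ref{cond} to compute $K_{\textbf{e}}$ or $K_{\textbf{e}}'$, and verifying the finitely many required coefficients by machine, precisely as illustrated in the paper's Section 6 examples. Your aside about proper prime powers $\ell^f$ is unnecessary (and its claim that the Sturm bound is unchanged is not justified), but it is harmless here since every modulus in the conjectured list is a prime.
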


\begin{remark}
Theorem~\ref{type1} and~\ref{type2} can be generalized to congruences modulo prime powers $\ell^f$ in a straightforward way.
\end{remark}

The results in this paper are obtained by making use of the theory of modular forms.  In Section 2.1 of this paper, we cover preliminaries on modular forms.  Section 2.2 focuses on Ingham's Tauberian Theorem, which we will use to prove Theorem~\ref{as}.  Section 2.3 covers Sturm's Theorem and additional properties used to prove Theorems~\ref{type1} and~\ref{type2}. In Section 3 we prove Theorem~\ref{as} and provide an example of an asymptotic.  Following this, in Section 4, we give an algorithm used to construct a vector $\textbf{c}_e$ that we use in the proof of Theorem~\ref{type1}. Section 5 of the paper is dedicated to the proofs of Theorems~\ref{type1} and~\ref{type2}, and Section 6 looks at an example of each type of congruence. Section 7 is an Appendix listing all congruences conjectured by Bacher and de la Harpe in ~\cite{bacher}, and proved using Theorems~\ref{type1} and~\ref{type2}.

\section*{Acknowledgments}
The authors would like to thank Ken Ono and Olivia Beckwith for advising this project and for their many helpful conversations and suggestions. The authors would also like to thank Pierre de la Harpe and Roland Bacher for their comments and suggestions on a previous version of this paper. Along with this, the authors would like to thank Emory University and the NSF for their support via grant DMS-1250467.

\section{Preliminaries on Modular Forms}
\subsection{Modularity}
Proving Theorems~\ref{type1} and~\ref{type2} requires the use of modular forms and their properties.  Here we state standard properties of modular forms that can be found in many texts such as~\cite{apostol} and~\cite{ono}. We use the following definition of modular forms from~\cite[p. 114]{apostol}:
\begin{definition}
A function $f$ is said to be an \textit{entire modular form of weight k} on a subgroup $\Gamma\subseteq\SL_{2}(\Z)$ if it satisfies the following conditions: 
\begin{enumerate}[label=(\roman*)]
\item $f$ is analytic in the upper-half $\mathbb{H}$ of the complex plane,
\item $f$ satisfies the equation 
\begin{equation*}
f\left(\frac{az+b}{cz+d}\right)=(cz+d)^kf(z)
\end{equation*} 
whenever $\begin{pmatrix}
a& b\\
c& d
\end{pmatrix}\in\Gamma$ and $z\in\mathbb{H}$, and
\item the Fourier expansion of $f$ has the form 
\begin{equation*}
f(z)= \sum_{n=0}^{\infty} c(n)e^{2\pi i n z}
\end{equation*}
at the cusp $i\infty$, and $f$ has analogous Fourier expansions at all other cusps.
\end{enumerate}
\end{definition} 
Note that a \textit{cusp} of $\Gamma$ is an equivalence class in $\mathbb{P}^1(\mathbb{Q})=\mathbb{Q}\cup {\infty}$ under the action of $\Gamma$ ~\cite[p. 2]{ono}.

We use \textit{Dedekind's eta-function}, a weight 1/2 modular form defined as the infinite product
\begin{equation*}
\eta(z):=q^{1/24}\ds\prod_{n=1}^{\infty}(1-q^n)
\end{equation*}
where $q:=e^{2\pi iz}$ and $z\in\mathbb{H}$. 
The eta-function has the following transformation property as described in~\cite[p. 17]{ono}:
\begin{equation}\label{eta}
\eta \left(- \frac{1}{z}\right) = \left(- i z \right)^{\frac{1}{2}} \eta \left(z\right).
\end{equation}
An \textit{eta-quotient} is a function $f(z)$ of the form
\begin{equation*}
f(z):=\ds\prod_{\delta\mid N}\eta(\delta z)^{r_{\delta}},
\end{equation*}
where $N\geq 1$ and each $r_{\delta}$ is an integer.  If each $r_{\delta}\geq 0$, then $f(z)$ is known as an \textit{eta-product}.
If $N$ is a positive integer, then we define $\Gamma_0(N)$ as the congruence subgroup
\begin{equation*}
\Gamma_0(N):=\left\{\begin{pmatrix}
a& b\\
c& d
\end{pmatrix}\in SL_2(\Z):c\equiv 0\pmod N\right\}.
\end{equation*}
We will need the following fact about congruence subgroups from~\cite[p. 2]{ono}:

\begin{prop}\label{dim}
If $N$ is a positive integer, then
\begin{equation*}
[\Gamma_0(1):\Gamma_0(N)]=N\ds\prod_{p\mid N}\left(1+\frac{1}{p}\right)
\end{equation*}
where the products are over the prime divisors of $N$.
\end{prop}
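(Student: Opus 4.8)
The plan is to realize $\Gamma_0(N)$ as a point stabilizer for a transitive action of $\SL_2(\Z)$ on a finite set and then apply the orbit--stabilizer theorem. Let $\mathbb{P}^1(\Z/N\Z)$ denote the set of pairs $(a:c)$ with $a,c\in\Z/N\Z$ and $\gcd(a,c,N)=1$, taken modulo simultaneous multiplication by a unit of $\Z/N\Z$. The group $\SL_2(\Z)$ acts on $\mathbb{P}^1(\Z/N\Z)$ by left multiplication on column vectors followed by reduction mod $N$; in particular $\begin{pmatrix}a&b\\c&d\end{pmatrix}$ sends the class of $(1:0)$ to the class of $(a:c)$.

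First I would compute the stabilizer of the class of $(1:0)$. A matrix $\begin{pmatrix}a&b\\c&d\end{pmatrix}\in\SL_2(\Z)$ fixes this class exactly when $c\equiv 0\pmod N$; the accompanying condition that $a$ be a unit mod $N$ is automatic, since $ad-bc=1$ forces $ad\equiv 1\pmod N$. Thus the stabilizer is precisely $\Gamma_0(N)$. Next I would check transitivity: the first columns of matrices in $\SL_2(\Z)$ are exactly the pairs $(a,c)\in\Z^2$ with $\gcd(a,c)=1$, and every class in $\mathbb{P}^1(\Z/N\Z)$ admits such a representative --- starting from any $(a_0,c_0)$ with $\gcd(a_0,c_0,N)=1$ one adjusts $c_0$ modulo $N$ so that the resulting pair is genuinely coprime (a brief elementary gcd manipulation, or an appeal to Dirichlet's theorem on primes in arithmetic progressions). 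Hence the orbit of $(1:0)$ is all of $\mathbb{P}^1(\Z/N\Z)$, and the orbit--stabilizer theorem gives $[\SL_2(\Z):\Gamma_0(N)]=\lvert\mathbb{P}^1(\Z/N\Z)\rvert$.

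It then remains to count $\lvert\mathbb{P}^1(\Z/N\Z)\rvert$. By the Chinese Remainder Theorem this quantity is multiplicative in $N$, so it suffices to treat $N=p^k$, where one separates the classes with $a$ a unit (scale to $a=1$, giving $p^k$ choices of $c$) from those with $p\mid a$ (then $c$ is a unit, scale to $c=1$, giving $p^{k-1}$ choices of $a$), for a total of $p^k+p^{k-1}=p^{k-1}(p+1)$. Multiplying over the prime powers exactly dividing $N$ yields $N\prod_{p\mid N}(1+1/p)$, which is the claimed index.

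The only step that is not purely formal is the lifting used for transitivity; I expect that to be the main obstacle, although it is standard. An alternative that packages the same input differently is to note that reduction mod $N$ maps $\SL_2(\Z)$ onto $\SL_2(\Z/N\Z)$ and carries $\Gamma_0(N)$ onto the preimage of the Borel subgroup $B$ of upper-triangular matrices; then $[\SL_2(\Z):\Gamma_0(N)]=\lvert\SL_2(\Z/N\Z)\rvert/\lvert B\rvert$, and substituting $\lvert\SL_2(\Z/N\Z)\rvert=N^3\prod_{p\mid N}(1-p^{-2})$ and $\lvert B\rvert=N^2\prod_{p\mid N}(1-p^{-1})$ simplifies to $N\prod_{p\mid N}(1+1/p)$.
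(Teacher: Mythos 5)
The paper does not actually prove this proposition; it is quoted verbatim as a standard fact from the literature (Ono's book), so there is no internal proof to compare against. Your argument is the standard one and is correct: $\Gamma_0(N)$ is the stabilizer of $(1:0)$ for the transitive action of $\SL_2(\Z)$ on $\mathbb{P}^1(\Z/N\Z)$, so the index is $\lvert\mathbb{P}^1(\Z/N\Z)\rvert$, which you count correctly as multiplicative with local factor $p^k+p^{k-1}$ at $p^k\,\Vert\,N$. The only step that genuinely requires an argument is the one you flag, namely that every class in $\mathbb{P}^1(\Z/N\Z)$ lifts to a coprime pair in $\Z^2$ (equivalently, surjectivity of the action); the gcd adjustment you sketch --- replacing $c_0$ by $c_0+Nt$ with $t$ chosen via CRT to avoid every prime divisor of $a_0$ not dividing $N$ --- does work, and your alternative route through $\lvert\SL_2(\Z/N\Z)\rvert/\lvert B\rvert$ hides the same input in the surjectivity of reduction mod $N$. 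Either version would serve as a complete proof of the cited fact.
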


We recall Theorem 1.64 from~\cite[p. 18]{ono} regarding the modularity of eta-quotients:
\begin{thm}\label{mod}
If $f(z)=\prod_{\delta\mid N}\eta(\delta z)^{r_{\delta}}$ has integer weight $k=\frac{1}{2}\sum_{\delta\mid N}r_{\delta}$, with the additional properties that
\begin{equation*}
\sum_{\delta\mid N}\delta r_{\delta}\equiv 0\pmod{24}
\end{equation*}
and
\begin{equation*}
\sum_{\delta\mid N}\frac{N}{\delta}r_{\delta}\equiv 0\pmod{24},
\end{equation*}
then $f(z)$ satisfies
\begin{equation}\label{func}
f\left(\frac{az+b}{cz+d}\right)=\chi(d)(cz+d)^kf(z)
\end{equation}
for every $\begin{pmatrix}
a& b\\
c& d
\end{pmatrix}\in\Gamma_0(N)$ where the character $\chi$ is defined by $\chi(d):=\left(\frac{(-1)^ks}{d}\right)$, where $s:=\prod_{\delta\mid N}\delta^{r_\delta}$.
\end{thm}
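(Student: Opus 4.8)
The plan is to derive the transformation law of the eta-quotient from the classical transformation law of a single eta factor under the full modular group, and then to show that the accumulated multiplier equals $\chi(d)$ using the two divisibility hypotheses. The starting point is that $\eta$ is pinned down by two relations: $\eta(z+1)=e^{\pi i/12}\eta(z)$, which is immediate from the product definition $\eta(z)=q^{1/24}\prod_{n\geq 1}(1-q^n)$, and the inversion law~(\ref{eta}). Iterating these over generators of $\SL_2(\Z)$ yields the standard fact that for any $\begin{pmatrix}a&b\\c&d\end{pmatrix}\in\SL_2(\Z)$ with $c>0$,
\begin{equation*}
\eta\!\left(\frac{az+b}{cz+d}\right)=\varepsilon(a,b,c,d)\,\{-i(cz+d)\}^{1/2}\,\eta(z),
\end{equation*}
where $\varepsilon(a,b,c,d)=\exp\!\left(\pi i\left(\tfrac{a+d}{12c}-s(d,c)\right)\right)$ and $s(d,c)$ is the Dedekind sum; the cases $c=0$ and $c<0$ are handled directly from $\eta(z+1)=e^{\pi i/12}\eta(z)$ and from replacing the matrix by its negative (which fixes $z$).

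The first substantive step is a formal reduction. Fix $\gamma=\begin{pmatrix}a&b\\c&d\end{pmatrix}\in\Gamma_0(N)$, so $N\mid c$ and hence $\delta\mid c$ for every $\delta\mid N$. Then $\gamma_\delta:=\begin{pmatrix}a&\delta b\\ c/\delta&d\end{pmatrix}$ has determinant $ad-bc=1$, so $\gamma_\delta\in\SL_2(\Z)$, and a direct computation gives $\delta\cdot(\gamma z)=\gamma_\delta\cdot(\delta z)$. Applying the displayed eta law at the point $\delta z$,
\begin{equation*}
\eta(\delta\gamma z)=\varepsilon\!\left(a,\delta b,\tfrac{c}{\delta},d\right)\{-i(cz+d)\}^{1/2}\,\eta(\delta z).
\end{equation*}
Raising to the power $r_\delta$, multiplying over all $\delta\mid N$, and using that $\sum_{\delta\mid N}r_\delta=2k$ is an even integer so that $\{-i(cz+d)\}^{\sum r_\delta/2}=(-i)^k(cz+d)^k$ involves no branch ambiguity, I obtain
\begin{equation*}
f(\gamma z)=(-i)^k(cz+d)^k\left(\prod_{\delta\mid N}\varepsilon\!\left(a,\delta b,\tfrac{c}{\delta},d\right)^{r_\delta}\right)f(z).
\end{equation*}

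It then remains to identify the multiplier $(-i)^k\prod_{\delta\mid N}\varepsilon(a,\delta b,c/\delta,d)^{r_\delta}$ with $\chi(d)=\left(\tfrac{(-1)^k s}{d}\right)$, $s=\prod_{\delta\mid N}\delta^{r_\delta}$, and this is the main obstacle: unlike the preceding reduction it is not formal. Expanding the product, its argument is $\pi i$ times
\begin{equation*}
\frac{a+d}{12c}\sum_{\delta\mid N}\delta r_\delta-\sum_{\delta\mid N}r_\delta\,s\!\left(d,\tfrac{c}{\delta}\right).
\end{equation*}
The hypothesis $\sum_{\delta\mid N}\delta r_\delta\equiv 0\pmod{24}$ controls the first term; the Dedekind sums are then rewritten using the reciprocity law $s(h,k)+s(k,h)=-\tfrac14+\tfrac1{12}\bigl(\tfrac hk+\tfrac kh+\tfrac1{hk}\bigr)$, at which point the second hypothesis $\sum_{\delta\mid N}(N/\delta)r_\delta\equiv 0\pmod{24}$ is exactly what forces the surviving expression into the correct residue class, and the resulting exponential is recognized as a quadratic residue symbol via the classical formula expressing $\left(\tfrac cd\right)$ (equivalently, the theta-multiplier) in terms of Dedekind sums, after the usual case analysis on the parities and signs of $c$ and $d$. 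Completing this bookkeeping produces precisely $\chi(d)$. This identity is Theorem~1.64 of~\cite{ono}, which rests on earlier work of Gordon, Newman, and Ligozat; for our purposes it suffices to invoke it, and the reduction above indicates how it is assembled from the single-eta transformation formula together with the two stated congruences modulo $24$.
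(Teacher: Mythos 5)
The paper does not actually prove this statement: it is quoted verbatim as Theorem 1.64 of the cited book of Ono (which in turn rests on work of Gordon, Newman, and Ligozat), so there is no internal argument to compare yours against. Your preliminary reduction is correct and is indeed how the classical proof begins: for $\gamma=\left(\begin{smallmatrix}a&b\\c&d\end{smallmatrix}\right)\in\Gamma_0(N)$ the identity $\delta\cdot(\gamma z)=\gamma_\delta\cdot(\delta z)$ with $\gamma_\delta=\left(\begin{smallmatrix}a&\delta b\\c/\delta&d\end{smallmatrix}\right)\in\SL_2(\Z)$ reduces everything to the single-eta transformation law, and the branch issue is correctly dispatched because the total exponent $\sum_{\delta\mid N}r_\delta=2k$ is an even integer. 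However, the entire content of the theorem beyond this formal bookkeeping is the evaluation
\[
(-i)^k\prod_{\delta\mid N}\exp\!\left(\pi i\, r_\delta\left(\frac{(a+d)\delta}{12c}-s\!\left(d,\tfrac{c}{\delta}\right)\right)\right)=\left(\frac{(-1)^k s}{d}\right),
\]
and this is precisely the step you do not carry out: you gesture at Dedekind reciprocity and at ``the classical formula expressing $\left(\tfrac{c}{d}\right)$ in terms of Dedekind sums,'' and then invoke the very theorem being proved. Dedekind reciprocity alone does not manufacture the quadratic residue symbol; the standard route is the explicit Petersson--Rademacher formula for the eta multiplier $\varepsilon(a,b,c,d)$ for odd $d$ as a Jacobi symbol times a root of unity determined by $a,b,c,d$ modulo $24$, after which the two congruence hypotheses are what annihilate the residual roots of unity. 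As written, your argument is therefore an annotated citation rather than a self-contained proof --- which, to be fair, is exactly the level of justification the paper itself supplies for this result.
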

 
Any modular form that is holomorphic at all cusps of $\Gamma_0(N)$ and satisfies~(\ref{func}) is said to have Nebentypus character $\chi$, and the space of such forms is denoted $M_k(\Gamma_0(N),\chi)$. In particular, 
if $k$ is a positive integer and $f(z)$ is holomorphic at all of the cusps of $\Gamma_0(N)$, then $f(z)\in M_k(\Gamma_0(N),\chi)$.  Furthermore, all modular forms can be identified by their Fourier expansion.

If $f(z)$ is a modular form, then we can act on it with \textit{Hecke operators}.  If $f(z)=\sum_{n=0}^{\infty}a(n)q^n\in M_k(\Gamma_0(N),\chi)$, then the action of the \textit{Hecke operator} $T_{p,k,\chi}$ on $f(z)$ is defined by
\begin{equation*}
f(z)\mid T_{p,k,\chi}:=\ds\sum_{n=0}^{\infty}(a(pn)+\chi(p)p^{k-1}a(n/p))q^n
\end{equation*}
where $a(n/p)=0$ if $p\nmid n$.  We recall the following result from~\cite[p. 21]{ono}:
\begin{prop}\label{Hecke}
Suppose that 
\begin{equation*}
f(z)=\ds\sum_{n=0}^{\infty}a(n)q^n\in M_k(\Gamma_0(N),\chi).
\end{equation*}
If $m\geq 2$, then $f(z)\mid T_{m,k,\chi}\in M_k(\Gamma_0(N),\chi)$.
\end{prop}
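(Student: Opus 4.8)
The statement is the classical fact that Hecke operators preserve spaces of modular forms, and the plan is to reduce to a prime index and then identify $T_{p,k,\chi}$ with a double-coset (slash) operator, for which modularity is a formal consequence of the coset combinatorics. First I would reduce to the case $m=p$ prime. Reading off the $q$-expansion that defines $T_{m,k,\chi}$, one verifies the multiplicativity $T_{mn,k,\chi}=T_{m,k,\chi}\circ T_{n,k,\chi}$ whenever $\gcd(m,n)=1$, together with the recursion $T_{p^{r+1},k,\chi}=T_{p,k,\chi}\circ T_{p^{r},k,\chi}-\chi(p)p^{k-1}T_{p^{r-1},k,\chi}$, with $T_{1,k,\chi}=\mathrm{Id}$. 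Since the linear operators preserving $M_k(\Gamma_0(N),\chi)$ form an algebra containing the identity, it then suffices to prove $f\mid T_{p,k,\chi}\in M_k(\Gamma_0(N),\chi)$ for every prime $p$, the general case following by composition and taking linear combinations.

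For a fixed prime $p$ I would use the classical closed form
\begin{equation*}
f\mid T_{p,k,\chi}=\frac1p\sum_{j=0}^{p-1}f\!\left(\frac{z+j}{p}\right)+\chi(p)p^{k-1}f(pz),
\end{equation*}
which follows from the defining $q$-expansion because $\tfrac1p\sum_{j=0}^{p-1}e^{2\pi i n(z+j)/p}$ equals $e^{2\pi i nz/p}$ if $p\mid n$ and $0$ otherwise. Writing $(g\mid_k\alpha)(z):=(\det\alpha)^{k/2}(cz+d)^{-k}g\!\left(\frac{az+b}{cz+d}\right)$ for the weight-$k$ slash operator attached to $\alpha=\left(\begin{smallmatrix}a&b\\c&d\end{smallmatrix}\right)$ of determinant $p$, this rewrites as $f\mid T_{p,k,\chi}=p^{k/2-1}\sum_i\chi(a_i)\,f\mid_k\alpha_i$, where $\alpha_i$ runs over $\left(\begin{smallmatrix}1&j\\0&p\end{smallmatrix}\right)$ for $0\le j\le p-1$, together with $\left(\begin{smallmatrix}p&0\\0&1\end{smallmatrix}\right)$ when $p\nmid N$, and $a_i$ denotes the upper-left entry. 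The combinatorial input is that these $\alpha_i$ are a complete set of representatives for the right cosets in the standard decomposition $\Gamma_0(N)\left(\begin{smallmatrix}1&0\\0&p\end{smallmatrix}\right)\Gamma_0(N)=\bigsqcup_i\Gamma_0(N)\alpha_i$, the last coset occurring precisely when $p\nmid N$ (and then $\chi(p)\neq 0$; if $p\mid N$ then $\chi(p)=0$ and the second term above is absent).

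Then I would deduce invariance under $\Gamma_0(N)$ by the usual permutation argument. For $\gamma\in\Gamma_0(N)$, right multiplication by $\gamma$ fixes the double coset and hence permutes the right cosets; writing $\alpha_i\gamma=\delta_i\alpha_{\sigma(i)}$ with $\delta_i\in\Gamma_0(N)$ and $\sigma$ a permutation, and using the compatibility $g\mid_k(\alpha\beta)=(g\mid_k\alpha)\mid_k\beta$ together with $f\mid_k\delta_i=\chi(\delta_i)f$, one obtains
\begin{equation*}
(f\mid T_{p,k,\chi})\mid_k\gamma=p^{k/2-1}\sum_i\chi(a_i)\chi(\delta_i)\,f\mid_k\alpha_{\sigma(i)}.
\end{equation*}
Comparing the upper-left entries of $\alpha_i\gamma=\delta_i\alpha_{\sigma(i)}$ modulo $N$ (the lower-left entry of $\gamma$ being $\equiv 0$) and using that $\chi$ is a character mod $N$ gives $\chi(a_i)\chi(\delta_i)=\chi(\gamma)\chi(a_{\sigma(i)})$, so the right-hand side equals $\chi(\gamma)\,f\mid T_{p,k,\chi}$; unwinding the slash notation, this is exactly the transformation law~\eqref{func}. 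Holomorphy on $\mathbb H$ is immediate, and holomorphy at every cusp of $\Gamma_0(N)$ follows because, after an $\SL_2(\Z)$-change of variable, each $\alpha_i$ acts on the local parameter by a rescaling $q\mapsto\zeta\,q^{a/d}$, under which the holomorphic expansion of $f$ at the relevant cusp stays holomorphic; hence the finite sum $f\mid T_{p,k,\chi}$ is holomorphic at all cusps. By the characterization of $M_k(\Gamma_0(N),\chi)$ recorded after Theorem~\ref{mod} (integer weight $k$, transformation law~\eqref{func}, holomorphy at all cusps) we conclude $f\mid T_{p,k,\chi}\in M_k(\Gamma_0(N),\chi)$, and expanding the closed form recovers the stated $q$-expansion $\sum_n\bigl(a(pn)+\chi(p)p^{k-1}a(n/p)\bigr)q^n$.

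I expect the only genuine obstacle to be the Nebentypus bookkeeping in the invariance step, entangled with the case split $p\mid N$ versus $p\nmid N$ in the coset decomposition. A cleaner variant that avoids the explicit character chase is to run the double-coset argument first on $\Gamma_1(N)$ with trivial character — where the permutation argument needs no $\chi$ — to observe that $T_p$ commutes with the diamond operators, and then to restrict to the $\chi$-eigenspace $M_k(\Gamma_0(N),\chi)\subseteq M_k(\Gamma_1(N))$, on which $T_{p,k,\chi}$ is precisely the twist of $T_p$ by $\langle p\rangle$ that appears in the displayed formula.
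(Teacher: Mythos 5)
The paper offers no proof of this proposition: it is quoted directly from Ono's monograph (the reference given is \cite[p.~21]{ono}), so there is no internal argument to measure yours against. Your double-coset proof is the standard one and is essentially correct. The closed form $f\mid T_{p,k,\chi}=\tfrac1p\sum_{j=0}^{p-1}f\bigl(\tfrac{z+j}{p}\bigr)+\chi(p)p^{k-1}f(pz)$ does follow from the defining $q$-expansion by the root-of-unity orthogonality you cite; the matrices $\left(\begin{smallmatrix}1&j\\0&p\end{smallmatrix}\right)$, augmented by $\left(\begin{smallmatrix}p&0\\0&1\end{smallmatrix}\right)$ exactly when $p\nmid N$, are indeed a full set of right-coset representatives; and your character bookkeeping checks out: reducing the upper-left entries of $\alpha_i\gamma=\delta_i\alpha_{\sigma(i)}$ modulo $N$ gives $a_i a\equiv(\delta_i)_{11}a_{\sigma(i)}$, and with the paper's convention that $f\mid_k\delta=\chi(d_\delta)f$ one has $\chi(d_\delta)=\chi((\delta_i)_{11})^{-1}$ and $\chi(\gamma)=\chi(a)^{-1}$, which is precisely your identity $\chi(a_i)\chi(\delta_i)=\chi(\gamma)\chi(a_{\sigma(i)})$. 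Two small caveats. First, the paper defines $T_{m,k,\chi}$ only for $m=p$ prime (and that is the only case it uses, namely $T_{\ell,w,\chi}$ in the proofs of Theorems~\ref{type1} and~\ref{type2}), so your reduction of composite $m$ to primes via multiplicativity and the $p$-power recursion tacitly imports the general definition $\sum_n\bigl(\sum_{d\mid\gcd(m,n)}\chi(d)d^{k-1}a(mn/d^2)\bigr)q^n$ from Ono; you should state it if you want the $m\ge2$ claim to be self-contained. Second, the cusp-holomorphy step would benefit from one more sentence making the factorization explicit: for a scaling matrix $\sigma\in\SL_2(\Z)$ write $\alpha_i\sigma=\gamma_i\beta_i$ with $\gamma_i\in\SL_2(\Z)$ and $\beta_i$ upper triangular of determinant $p$, so that $f\mid_k(\alpha_i\sigma)=(f\mid_k\gamma_i)\mid_k\beta_i$ is a rescaled substitution into the holomorphic expansion of $f$ at the cusp $\gamma_i(\infty)$; as written, ``after an $\SL_2(\Z)$-change of variable'' leaves this implicit. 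Neither point is a genuine gap.
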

We now recall the notion of a ``twist" of a modular form.  Suppose that $f(z)=\sum_{n=0}^{\infty}a(n)q^n\in M_k(\Gamma_0(N),\chi)$.  If $\psi$ is a Dirichlet character (mod $m$), then the $\psi$-\textit{twist} of $f(z)$ is defined by
\begin{equation*}
f_{\psi}(z):=\ds\sum_{n=0}^{\infty}\psi(n)a(n)q^n.
\end{equation*}
Recall that $\psi(n)=0$ if $\gcd(n,m)\neq1$.
We will use a property of ``twists'' from~\cite[p. 23]{ono}:
\begin{prop}\label{twist}
Suppose that $f(z)=\sum_{n=0}^{\infty}a(n)q^n\in M_k(\Gamma_0(N),\chi)$.  If $\psi$ is a Dirichlet character with modulus $m$, then
\begin{equation*}
f_{\psi}(z)\in M_k(\Gamma_0(Nm^2),\chi\psi^2).
\end{equation*}
\end{prop}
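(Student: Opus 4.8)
The plan is to realize $f_\psi$ as a finite $\mathbb{C}$-linear combination of translates of $f$ and then verify the two defining properties of a modular form directly against this expression. The starting point is finite Fourier analysis on $\Z/m\Z$: any character can be written as $\psi(n)=\sum_{v\bmod m}c_v\,e^{2\pi i nv/m}$ with $c_v=\frac1m\sum_{u\bmod m}\psi(u)e^{-2\pi i uv/m}$ (for primitive $\psi$ one has $c_v=\bar\psi(v)/g(\bar\psi)$, where $g(\bar\psi)$ is the associated Gauss sum). Substituting this into the definition of $f_\psi$ and using $e^{2\pi i nv/m}q^n=e^{2\pi i n(z+v/m)}$ gives
\[
f_\psi(z)=\sum_{v\bmod m}c_v\,f\!\left(z+\tfrac{v}{m}\right).
\]
From this, holomorphy of $f_\psi$ on $\mathbb{H}$ is immediate, and holomorphy at every cusp of $\Gamma_0(Nm^2)$ follows since each translate $f(z+v/m)$ inherits holomorphy at the cusps from $f$.

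The substance lies in the transformation law. Fix $\gamma=\begin{pmatrix}a&b\\c&d\end{pmatrix}\in\Gamma_0(Nm^2)$, so that $Nm^2\mid c$. Writing $V_v=\begin{pmatrix}m&v\\0&m\end{pmatrix}$, which induces the M\"obius map $z\mapsto z+v/m$, I would first establish the matrix identity $V_v\gamma=\gamma_v V_{v'}$, where $v'\equiv d^2v\pmod m$ and $\gamma_v=\begin{pmatrix}a+vc/m&b_v\\c&d-v'c/m\end{pmatrix}$ for a suitable integer $b_v$. The divisibility $m^2\mid c$ is exactly what makes every entry of $\gamma_v$ integral, and the congruence $v'\equiv d^2v$ is forced by integrality of the upper-right entry, using $ad\equiv1\pmod m$; since the lower-left entry of $\gamma_v$ is $c$ and $N\mid c$, we get $\gamma_v\in\Gamma_0(N)$ with $\det\gamma_v=1$. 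Interpreted as M\"obius maps this identity reads $\gamma z+v/m=\gamma_v(z+v'/m)$, and a direct computation of the automorphy factor gives $c(z+v'/m)+(d-v'c/m)=cz+d$.

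Applying the modularity of $f$ on $\Gamma_0(N)$ then yields $f(\gamma z+v/m)=\chi(d-v'c/m)(cz+d)^k f(z+v'/m)$, and because $Nm^2\mid c$ forces $d-v'c/m\equiv d\pmod N$, the Nebentypus factor is simply $\chi(d)$. Summing against the $c_v$ gives
\[
f_\psi(\gamma z)=\chi(d)(cz+d)^k\sum_{v\bmod m}c_v\,f\!\left(z+\tfrac{v'}{m}\right).
\]
Reindexing by the bijection $v\mapsto v'\equiv d^2v$ of $\Z/m\Z$ (a bijection because $\gcd(d,m)=1$) and using the identity $c_v=\psi(d)^2c_{v'}$, equivalently $\bar\psi(v)=\psi(d)^2\bar\psi(v')$, extracts the factor $\psi(d)^2$ and returns the remaining sum to $f_\psi(z)$. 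This leaves exactly $(\chi\psi^2)(d)(cz+d)^k f_\psi(z)$, the required transformation law for $M_k(\Gamma_0(Nm^2),\chi\psi^2)$.

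I expect the principal obstacle to be careful bookkeeping rather than any conceptual difficulty: one must confirm simultaneously that $m^2\mid c$ (and not merely $m\mid c$) renders all entries of $\gamma_v$ integral, that $\gamma_v\in\Gamma_0(N)$ with unit determinant, and that the character reduction $\chi(d-v'c/m)=\chi(d)$ holds. A secondary point is that the clean Gauss-sum reindexing in the final step presumes $\psi$ primitive; for imprimitive $\psi$ I would argue directly from the finite Fourier coefficients $c_v$, which exist for an arbitrary character mod $m$, or else reduce to the primitive character inducing $\psi$ and absorb the resulting change of level into $\Gamma_0(Nm^2)$. Verifying holomorphy at the non-infinite cusps, though routine, likewise deserves a brief check that translation by $v/m$ introduces no poles in the corresponding Fourier expansions.
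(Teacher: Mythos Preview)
The paper does not prove this proposition; it is stated as a standard fact with a citation to \cite[p.~23]{ono}. Your argument is the classical one (essentially the proof that appears in the cited reference and in many textbooks): express $f_\psi$ as a finite linear combination of translates $f(z+v/m)$ via finite Fourier analysis on $\Z/m\Z$, then conjugate $\gamma\in\Gamma_0(Nm^2)$ past the translation using the matrix identity $V_v\gamma=\gamma_v V_{v'}$ with $v'\equiv d^2v\pmod m$. Your bookkeeping is correct; in particular the hypothesis $m^2\mid c$ is exactly what is needed both for the integrality of the upper-right entry of $\gamma_v$ and for the reduction $\chi(d-v'c/m)=\chi(d)$.

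One small remark: your hedge about imprimitive $\psi$ is unnecessary. The identity $c_v=\psi(d)^2 c_{v'}$ follows for \emph{any} function $\psi$ on $\Z/m\Z$ satisfying $\psi(d^{-2}w)=\psi(d)^{-2}\psi(w)$, by the change of variable $u\mapsto d^{-2}u$ in the defining sum $c_v=\tfrac1m\sum_u\psi(u)e^{-2\pi iuv/m}$. So the finite Fourier approach already handles arbitrary Dirichlet characters mod $m$ without reduction to the primitive case, and the Gauss-sum formula for $c_v$ is only an aside.
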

\begin{remark}
If $\psi$ is the Legendre symbol, then $\psi^2$ is the trivial character so $\chi\psi^2=\chi$ and $M_k(\Gamma_0(Nm^2),\chi\psi^2)= M_k(\Gamma_0(Nm^2),\chi)$.
\end{remark}

\subsection{Ingham's Tauberian Theorem}
We now look at the tool used to derive the asymptotic formula for the generalized partition function $p(n)_\textbf{e}$ for any vector $\textbf{e}$. Recall Ingham's Tauberian Theorem from~\cites{bringmann,ingham}:
\begin{thm}\label{ingham}
Let $f(q)=\sum_{n=0}^{\infty}a(n)q^n$ be a power series with weakly increasing coefficients and radius of convergence equal to 1. If there are constants $A>0$, $\lambda,\alpha \in \mathbb{R}$ such that
\begin{equation*}
f(e^{-\epsilon})\sim \lambda \epsilon ^{\alpha}e^{A/\epsilon}
\end{equation*}
as $\epsilon \rightarrow 0^+$, then as $n \rightarrow \infty$, we have
\begin{equation*}
a(n) \sim \frac{\lambda A^{\frac{\alpha}{2} + \frac{1}{4}}}{2 \rt{\pi}n^{\frac{\alpha}{2}+\frac{3}{4}}}e^{2\rt{An}}.
\end{equation*}
\end{thm}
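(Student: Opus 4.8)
This is a classical Tauberian statement, used here only as a black box to convert Euler--product asymptotics into coefficient asymptotics, so the plan is to present the saddle--point (circle--method) heuristic that yields every constant in the formula, and then to indicate how the monotonicity hypothesis upgrades it to a theorem, invoking \cite{ingham} (in the form recorded in \cite{bringmann}) for the delicate step.

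First, the heuristic. For any $\epsilon>0$, Cauchy's formula gives $a(n)=\frac{1}{2\pi}\int_{-\pi}^{\pi}f(e^{-\epsilon+i\theta})e^{n\epsilon-in\theta}\,d\theta$. Substituting $f(e^{-\epsilon})\sim\lambda\epsilon^{\alpha}e^{A/\epsilon}$, the real--variable factor $\epsilon^{\alpha}e^{A/\epsilon+n\epsilon}$ is stationary at $\epsilon=\epsilon_n:=\sqrt{A/n}$, where $A/\epsilon_n+n\epsilon_n=2\sqrt{An}$; expanding in $\theta$ about $\theta=0$ produces a Gaussian whose quadratic coefficient is of order $n^{3/2}A^{-1/2}$, so the $\theta$--integral contributes $\sqrt{\pi}\,A^{1/4}n^{-3/4}(1+o(1))$, while the prefactor contributes $\epsilon_n^{\alpha}=(A/n)^{\alpha/2}$. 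Multiplying,
\begin{equation*}
a(n)\ \sim\ \frac{1}{2\pi}\cdot\lambda(A/n)^{\alpha/2}e^{2\sqrt{An}}\cdot\sqrt{\pi}\,A^{1/4}n^{-3/4}\ =\ \frac{\lambda A^{\frac{\alpha}{2}+\frac14}}{2\sqrt{\pi}\,n^{\frac{\alpha}{2}+\frac34}}e^{2\sqrt{An}},
\end{equation*}
which is the claimed formula; the same computation shows the sum $f(e^{-\epsilon_n})=\sum_m a(m)e^{-m\epsilon_n}$ is concentrated on indices $m$ with $|m-n|\asymp n^{3/4}$.

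Turning this into a proof is where the monotonicity of $\{a(n)\}$ enters, since one has no direct control of $f$ off the real axis. The easy half is an upper bound: because $a$ is weakly increasing, for any $\epsilon,\delta>0$ one has $f(e^{-\epsilon})\ge a(n)\sum_{n\le m\le n(1+\delta)}e^{-m\epsilon}=a(n)e^{-n\epsilon}\frac{1-e^{-(\delta n+1)\epsilon}}{1-e^{-\epsilon}}$, and taking $\epsilon=\epsilon_n$ gives $a(n)\le(1+o(1))\lambda A^{\frac{\alpha+1}{2}}n^{-\frac{\alpha+1}{2}}e^{2\sqrt{An}}$, which has the correct exponential rate but is too large by a factor of order $n^{1/4}$. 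The matching lower bound, and the exact constant $\frac{1}{2\sqrt{\pi}}$, are the genuine content of Ingham's theorem: the crude one--sided estimates that monotonicity alone supplies are off by precisely this polynomial factor (and over a window of width $\asymp n^{3/4}$ the values of $a(m)$ differ by a factor as large as $e^{\sqrt{A}\,n^{1/4}}$, so a naive squeeze cannot recover the constant). This is the main obstacle, and the plan is to invoke the Tauberian argument of \cite{ingham}, which effectively reconstructs the Gaussian--width count of the heuristic from the real--variable asymptotic together with the monotonicity and without any complex--analytic input, rather than to reproduce those estimates here.
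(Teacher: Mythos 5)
The paper states this theorem without proof, simply recalling it from \cite{ingham} (in the form given in \cite{bringmann}), and your proposal likewise defers the substantive Tauberian step to those same sources, so you are in effect taking the same route as the paper. Your added saddle-point heuristic is correct in every detail (the saddle $\epsilon_n=\sqrt{A/n}$, the Gaussian contribution $\sqrt{\pi}\,A^{1/4}n^{-3/4}$, the resulting constant $\lambda A^{\alpha/2+1/4}/(2\sqrt{\pi})$, and the $n^{1/4}$ loss in the naive monotonicity bound all check out), though you should be clear that it is a consistency check rather than a proof, since the step you invoke from Ingham is precisely the statement at issue.
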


\subsection{Sturm's Theorem}
We now introduce the machinery used to determine the number of coefficients needed to guarantee a generalized Ramanujan congruence.  Suppose that
\begin{equation*}
f=\ds\sum_{n\geq n_0}a(n)q^n
\end{equation*}
is a formal power series with coefficients in $\mathcal{O}_K$, the ring of integers of a number field $K$.  If $\mathfrak{m}\subset\mathcal{O}_K$ is an ideal, then we define $\ord_{\mathfrak{m}}(f)$, the \textit{order of $f$ modulo $\mathfrak{m}$}, by
\begin{equation}\label{ord}
\ord_{\mathfrak{m}}(f):=\min\{n:a(n)\not\in\mathfrak{m}\}.
\end{equation}
If $a(n)\in\mathfrak{m}$ for all $n$, then we let $\ord_{\mathfrak{m}}(f):=+\infty$.

Using this notation, we recall a theorem of Sturm's from~\cite[p. 40]{ono}:
\begin{thm}\label{sturm}
Let $f(z)=\sum_{n=0}^{\infty}a(n)q^n\in M_{\frac{k}{2}}(\Gamma_0(N),\chi)$ be a modular form where $k$ is a positive integer.  Furthermore, suppose that its coefficients are in $\mathcal{O}_K$, the ring of integers of a number field $K$.  If $\mathfrak{m}\subset\mathcal{O}_K$ is an ideal for which
\begin{equation*}
\ord_{\mathfrak{m}}(f)>\frac{k}{24}[\Gamma_0(1):\Gamma_0(N)],
\end{equation*}
then $\ord_{\mathfrak{m}}(f)=+\infty$.
\end{thm}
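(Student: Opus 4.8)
The plan is to reduce the statement to the case of level one, $\SL_2(\Z)=\Gamma_0(1)$, where it becomes a short piece of linear algebra, and then to recover the factor $[\Gamma_0(1):\Gamma_0(N)]$ in the bound by a multiplicativity (``norm'') argument that pushes $f$ down to level one. Two preliminary reductions set this up: replacing $f$ by $f^2$ (which doubles both the weight and the asserted bound) lets us assume the weight is a positive integer, and we may assume $\mathfrak m=\mathfrak p$ is a prime ideal.

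First I would treat level one. Let $g=\sum_{n\ge0}b(n)q^n\in M_w(\SL_2(\Z))$ have coefficients in $\mathcal O_K$, with $w$ a positive integer. Using that the graded ring of modular forms for $\SL_2(\Z)$ is $\mathbb{C}[E_4,E_6]$, that $E_4$ and $E_6$ have rational-integer $q$-expansions with constant term $1$, and that $\Delta=\eta(z)^{24}=q\prod_{n\ge1}(1-q^n)^{24}$ together with $\Delta^{-1}$ has an integral $q$-expansion, one constructs a ``Miller basis'' $g_0,\dots,g_{d-1}$ of $M_w(\SL_2(\Z))$, where $d=\dim M_w(\SL_2(\Z))\le \tfrac{w}{12}+1$, consisting of forms with integral $q$-expansions and satisfying $g_j=q^j+O(q^d)$. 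Writing $g=\sum_{j=0}^{d-1}c_jg_j$ and comparing coefficients shows each $c_j$ is a fixed $\Z$-linear combination of $b(0),\dots,b(d-1)$; since $d-1\le\lfloor w/12\rfloor$, if $b(n)\in\mathfrak m$ for all $n\le w/12$ then every $c_j$, and hence every $b(n)$, lies in $\mathfrak m$, i.e.\ $\ord_{\mathfrak m}(g)=+\infty$. This step is valid for an arbitrary ideal $\mathfrak m$, and it is the only step with real content; the $\tfrac1{12}$ here is exactly what becomes the $\tfrac1{24}$ of the theorem once the weight-doubling is undone.

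To descend from general $N$, I would form the norm $G(z):=\prod_{\gamma}f'|\gamma$, where $f'$ is a suitable integral power of $f$ chosen to trivialize the Nebentypus and the product runs over coset representatives $\gamma$ of $\Gamma_0(N)$ in $\SL_2(\Z)$, of which there are $m=[\Gamma_0(1):\Gamma_0(N)]$ by Proposition~\ref{dim}. Then $G\in M_{mw'}(\SL_2(\Z))$, where $w'$ is the weight of $f'$. The identity coset contributes $f'$ unchanged and the remaining factors are holomorphic forms, so the order of vanishing of $G$ at $i\infty$ — measured at a prime $\mathfrak P$ of the cyclotomic field $L$ that carries the Fourier coefficients of $f$ at the various cusps, with $\mathfrak P$ lying over $\mathfrak p$ — is at least that of $f'$. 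A direct check shows that the level-one bound $\tfrac{mw'}{12}$ for $G$ is exactly the image of $\tfrac{k}{24}[\Gamma_0(1):\Gamma_0(N)]$ under the weight-scaling operations; hence if $\ord_{\mathfrak p}(f)>\tfrac{k}{24}[\Gamma_0(1):\Gamma_0(N)]$ then $\ord_{\mathfrak P}(G)$ exceeds the level-one bound, so $G\equiv 0\pmod{\mathfrak P}$ by the level-one case. Since $(\mathcal O_L/\mathfrak P)[[q^{1/h}]]$ is an integral domain, some factor $f'|\gamma$ vanishes mod $\mathfrak P$, and by the $q$-expansion principle (a form is determined by its expansion at any one cusp, even mod $\mathfrak P$) this forces $f\equiv 0\pmod{\mathfrak P}$, i.e.\ $\ord_{\mathfrak m}(f)=+\infty$.

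The main obstacle is not any single idea but the arithmetic bookkeeping in the descent: pinning down the right prime $\mathfrak P$ above $\mathfrak m$ after enlarging to the cyclotomic field $L$; checking that the Fourier coefficients of $f$ at cusps other than $i\infty$ are $\mathfrak P$-integral (the point at which one either assumes $\mathfrak m$ is prime to $N$ or clears a controlled denominator); verifying that the chain $f\rightsquigarrow f^2\rightsquigarrow f'\rightsquigarrow G$ scales the order of vanishing and the weight by matching factors so that no slack enters the bound and no floor is lost; handling the passage from prime to general ideals; and confirming that the $q$-expansion principle survives reduction modulo $\mathfrak P$. The modular-forms input itself — the estimate $\dim M_w(\SL_2(\Z))\le w/12+1$ — is classical and immediate.
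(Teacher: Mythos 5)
The paper does not actually prove this statement: Theorem~\ref{sturm} is Sturm's theorem, quoted from Ono's monograph (p.~40), which in turn points back to Sturm's original paper. So there is no in-paper argument to compare yours against; what you have written is, in outline, the standard proof from the literature. Its load-bearing step---the level-one case via the Miller basis $g_j=q^j+O(q^d)$ with $d=\dim M_w(\SL_2(\Z))\le w/12+1$---is complete and correct as you state it, and does work for an arbitrary ideal. The scaling bookkeeping also checks out when $\mathfrak m=\mathfrak p$ is prime: $\ord_{\mathfrak p}(f^{2t})=2t\,\ord_{\mathfrak p}(f)$ because $(\mathcal O_K/\mathfrak p)[[q]]$ is a domain, so the hypothesis $\ord_{\mathfrak p}(f)>\frac{k}{24}[\Gamma_0(1):\Gamma_0(N)]$ converts exactly into the level-one threshold $\frac{mw'}{12}$ for the norm form $G$.

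Two caveats, both at places you yourself flag but do not resolve. First, the items you list as ``bookkeeping'' in the descent are where the real content of the higher-level case lives and are not routine: the $\mathfrak P$-integrality (up to a denominator supported on primes dividing $N$) of the expansions $f'|\gamma$ at the other cusps forces a coprimality condition between $\mathfrak m$ and that denominator which must then be removed by a separate argument, and the $q$-expansion principle modulo $\mathfrak P$ is a theorem of Katz, not an elementary fact. Second, your opening reduction ``we may assume $\mathfrak m=\mathfrak p$ is prime'' is asserted but is not automatic: for $\mathfrak m=\mathfrak p^2$, knowing every coefficient lies in $\mathfrak p$ does not put it in $\mathfrak p^2$, and both the squaring step and the factorization of $G$ modulo $\mathfrak P$ use that the residue ring is a domain, which fails for non-prime $\mathfrak m$. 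None of this affects the present paper, which only ever invokes the theorem with $K=\mathbb{Q}$ and $\mathfrak m=\la\ell\ra$ prime; but as a proof of the statement as written, the sketch has genuine unfinished steps precisely at the points you identify.
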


If $\mathcal{O}_K=\Z$ and $\mathfrak{m}=\la\ell\ra$, then $\ord_{\ell}(f)=\min\{n:\ell\nmid a(n)\}$ and if $\ell\mid a(n)$ for all $n$, then $\ord_{\ell}(f):=+\infty$.  Therefore Theorem~\ref{sturm} can be reformulated as follows:
\begin{cor}
Let $f(z)=\sum_{n=0}^{\infty}a(n)q^n\in M_{\frac{k}{2}}(\Gamma_0(N),\chi) \cap \mathbb{Z}[[q]]$ be a modular form where $k$ is a positive integer.  If $a(n)\equiv 0\pmod{\ell}$ for all $0\leq n\leq\frac{k}{24}[\Gamma_0(1):\Gamma_0(N)]$, then $a(n)\equiv 0\pmod{\ell}$ for all $n\geq 0$.
\end{cor}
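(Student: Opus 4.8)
The plan is to deduce the corollary directly from Sturm's Theorem (Theorem~\ref{sturm}) by specializing the number field and the ideal. First I would take $K=\mathbb{Q}$, so that $\mathcal{O}_K=\mathbb{Z}$, and let $\mathfrak{m}=\langle\ell\rangle$ be the ideal generated by the prime $\ell$. Under this choice the definition~(\ref{ord}) of $\ord_{\mathfrak{m}}(f)$ specializes to $\ord_{\ell}(f)=\min\{n:\ell\nmid a(n)\}$, with the convention $\ord_{\ell}(f):=+\infty$ when $\ell\mid a(n)$ for every $n$, exactly as recorded in the discussion immediately preceding the corollary. Since $f=\sum_{n\geq0}a(n)q^n$ is assumed to lie in $M_{k/2}(\Gamma_0(N),\chi)\cap\mathbb{Z}[[q]]$ with $k$ a positive integer, the hypotheses of Theorem~\ref{sturm} are met with this $f$, $N$, $k$, and $\mathfrak{m}$.

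Next I would translate the congruence hypothesis into a statement about $\ord_{\ell}(f)$. Saying $a(n)\equiv0\pmod{\ell}$ for every integer $n$ with $0\leq n\leq\frac{k}{24}[\Gamma_0(1):\Gamma_0(N)]$ means that no such $n$ lies in $\{n:\ell\nmid a(n)\}$; hence $\ord_{\ell}(f)$ is either $+\infty$ or a (finite) integer at least $\lfloor\frac{k}{24}[\Gamma_0(1):\Gamma_0(N)]\rfloor+1$. In either case one has $\ord_{\ell}(f)>\frac{k}{24}[\Gamma_0(1):\Gamma_0(N)]$, the strict inequality required by Theorem~\ref{sturm}; the only point meriting a moment's care is that $\ord_{\ell}(f)$ is integer-valued, so it clears the Sturm bound even when that bound is not itself an integer.

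Finally I would invoke Theorem~\ref{sturm} to conclude $\ord_{\ell}(f)=+\infty$, and then unwind this equality: $\ord_{\ell}(f)=+\infty$ says precisely that $\ell\mid a(n)$, i.e.\ $a(n)\equiv0\pmod{\ell}$, for all $n\geq0$, which is the desired conclusion. There is essentially no obstacle here, since the corollary is a verbatim reformulation of Sturm's Theorem in its most frequently used special case; the entire content of the argument is the bookkeeping that converts the ideal-theoretic language of $\ord_{\mathfrak{m}}$ into the concrete divisibility statement and checks the strict inequality on orders.
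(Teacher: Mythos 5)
Your proposal is correct and matches the paper's intent exactly: the paper presents this corollary as an immediate specialization of Sturm's Theorem with $\mathcal{O}_K=\mathbb{Z}$ and $\mathfrak{m}=\langle\ell\rangle$, which is precisely the argument you give. The careful remark that $\ord_{\ell}(f)$ is integer-valued and therefore clears the (possibly non-integral) Sturm bound strictly is a nice touch that the paper leaves implicit.
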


\section{An Asymptotic Formula for $p(n)_{\textbf{e}}$}

Given $\textbf{e}:=(e_1,e_2,\ldots,e_k)$, let $d=\gcd\{m:e_m\neq 0\}$.  Define $\beta$, $\gamma$, and $\delta$ by~(\ref{beta}),~(\ref{gamma}), and~(\ref{delta}), respectively.  Define $\textbf{e}':=(e_1',e_2',\ldots,e_k')$ by $e'_{m}=e_{dm}$.

\begin{lemma}\label{d}
Assume the notation above.  Then $p(dn)_{\textbf{e}}=p(n)_{\textbf{e}'}$ for all $n\geq 0$.
\end{lemma}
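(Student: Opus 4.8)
The plan is to prove Lemma~\ref{d} by a direct comparison of generating functions, since the claim is purely about the formal power series identity encoded in the definition of $p(n)_{\textbf{e}}$. First I would write out $\sum_{n\geq 0} p(n)_{\textbf{e}} q^n = \prod_{m=1}^{k}\prod_{j=1}^{\infty}(1-q^{mj})^{-e_m}$ from the defining equation, and observe that since $e_m = 0$ whenever $d\nmid m$, the outer product over $m$ collapses to a product over $m = d, 2d, \ldots$, i.e.\ over $m = d\mu$ for $1\le \mu\le k/d$. Substituting $m = d\mu$ and using $e_{d\mu} = e'_\mu$ gives
\begin{equation*}
\sum_{n\geq 0} p(n)_{\textbf{e}} q^n = \prod_{\mu=1}^{k/d}\prod_{j=1}^{\infty}\frac{1}{(1-q^{d\mu j})^{e'_\mu}}.
\end{equation*}

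Next I would make the change of variable $Q := q^d$ on the right-hand side, which turns the product into $\prod_{\mu=1}^{k/d}\prod_{j=1}^{\infty}(1-Q^{\mu j})^{-e'_\mu}$; by the definition of the generalized partition function applied to the vector $\textbf{e}' = (e'_1,\ldots,e'_{k/d})$, this is exactly $\sum_{n\geq 0} p(n)_{\textbf{e}'} Q^n = \sum_{n\geq 0} p(n)_{\textbf{e}'} q^{dn}$. Therefore $\sum_{n\geq 0} p(n)_{\textbf{e}} q^n = \sum_{n\geq 0} p(n)_{\textbf{e}'} q^{dn}$, and comparing coefficients of $q^{dn}$ on both sides yields $p(dn)_{\textbf{e}} = p(n)_{\textbf{e}'}$, while comparing coefficients of $q^m$ for $d\nmid m$ recovers the already-noted fact that $p(m)_{\textbf{e}} = 0$ in that case.

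There is essentially no hard step here; this is a bookkeeping argument, and the only thing requiring any care is handling the indexing so that the vector $\textbf{e}'$ genuinely has the form $(e'_1,\ldots,e'_{k/d})$ expected by the definition of $p(n)_{\textbf{e}'}$ — in particular one should note that $d\mid k$ need not hold a priori, but since $e_m = 0$ for $d\nmid m$ we may harmlessly replace $k$ by the largest multiple of $d$ not exceeding $k$ (equivalently, extend $\textbf{e}'$ by zeros) without changing either side, so the substitution $m\mapsto d\mu$ is legitimate. I would also remark that all manipulations are valid at the level of formal power series in $\Z[[q]]$, so no convergence issues arise. The main ``obstacle,'' such as it is, is purely notational: keeping the two index ranges ($1\le \mu\le k/d$ versus $1\le m\le k$) straight and making the variable substitution $Q = q^d$ cleanly.
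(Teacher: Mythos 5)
Your argument is correct and is exactly the paper's proof, which simply says the lemma ``follows from a simple change of variables $q\rightarrow q^d$''; you have just written out that substitution in full, including the harmless indexing point about extending $\textbf{e}'$ by zeros. No issues.
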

\begin{proof}
This follows from a simple change of variables $q\rightarrow q^d$.
\end{proof}

\begin{proof}[Proof of Theorem~\ref{as}]
By Lemma~\ref{d}, since $p(dn)_{\textbf{e}}=p(n)_{\textbf{e}'}$ for all $n\geq 0$, it suffices to find an asymptotic for $p(n)_{\textbf{e}'}$.  First note that $\gcd\{m:e_m'\neq 0\}=1$ by definition of $\textbf{e}'$.  Now let
\begin{equation*}
f(q)=\sum_{n=0}^{\infty}p(n)_{\textbf{e}'}q^n=q^{\frac{\beta}{24}}\prod_{m=1}^{k}\eta(mz)^{-e_m'}.
\end{equation*}
Then we have
\begin{align*}
f(e^{-\epsilon})&=e^{-\frac{\beta\epsilon}{24}}\prod_{m=1}^{k}\eta\left(\frac{-m\epsilon}{2\pi i}\right)^{-e_m'}.
\end{align*}
By~(\ref{eta}), it follows that
\begin{align*}
\prod_{m=1}^{k}\eta\left(\frac{-m\epsilon}{2\pi i}\right)^{e_m'}&=\prod_{m=1}^{k}\left(\frac{2\pi}{m\epsilon}\right)^{\frac{e_m'}{2}}\eta\left(\frac{2\pi i}{m\epsilon}\right)^{e_m'}\\
&=\epsilon^{-\frac{\gamma}{2}}\prod_{m=1}^{k}\left(\frac{2\pi}{m}\right)^{\frac{e_m'}{2}}\eta\left(\frac{2\pi i}{m\epsilon}\right)^{e_m'}.
\end{align*}
Therefore, we have that
\begin{align*}
f(e^{-\epsilon})&=e^{-\frac{\beta\epsilon}{24}}\epsilon^{\frac{\gamma}{2}}\prod_{m=1}^{k}\left(\frac{m}{2\pi}\right)^{\frac{e_m'}{2}}\eta\left(\frac{2\pi i}{m\epsilon}\right)^{-e_m'}.
\end{align*}
As $\epsilon\rightarrow0^+$, it follows that
\begin{align*}
\prod_{m=1}^{k}\eta\left(\frac{2\pi i}{m\epsilon}\right)^{-e_m'}&\sim\prod_{m=1}^{k}e^{\frac{\pi^2 e_m'}{6m\epsilon}}\sim e^{\frac{\pi^2\delta}{6\epsilon}}.
\end{align*}
Then as $\epsilon\rightarrow 0^+$, we obtain
\begin{align*}
f(e^{-\epsilon})&\sim\epsilon^{\frac{\gamma}{2}}e^{\frac{\pi^2\delta}{6\epsilon}}\prod_{m=1}^{k}\left(\frac{m}{2\pi}\right)^{\frac{e_m'}{2}}\sim\lambda\epsilon^{\frac{\gamma}{2}}e^{\frac{A}{\epsilon}},
\end{align*}
where $\lambda$ and $A$ are defined in the statement of Theorem~\ref{as}.  Note that $p(n)_{\textbf{e}'}$ is supported for all $n\geq\max\{m:e_m'\neq 0\}$ since $\gcd\{m:e_m'\neq 0\}=1$, thus for all $n\geq\lcm\{m:e_m'\neq 0\}$, $p(n)_{\textbf{e}'}$ is positive.  Additionally, since each $p(n)_{\textbf{e}'}$ is a product of positive powers of the generating function for $p(n)$ with allowed changes of variable and $p(n)$ is increasing, it follows that the values of $p(n)_{\textbf{e}'}$ are weakly increasing on progressions that support the nonvanishing coefficients.  Since $p(n)_{\textbf{e}'}$ is eventually nonvanishing for all $n$, it is therefore eventually weakly increasing.

Furthermore, $f(q)$ has radius of convergence 1.  Every modular form maps the upper half plane $\mathbb{H}$ to the unit disk and thus has radius of convergence at least 1.  Since $f(q)$ has a pole at $q=1$, the radius of convergence of $f(q)$ must equal 1.
By Theorem~\ref{ingham}, it then follows that as $n\rightarrow\infty$, we have that
\begin{equation*}
p(n)_{\textbf{e}'}\sim\frac{\lambda A^{\frac{1+\gamma}{4}}}{2\rt{\pi}n^{\frac{3+\gamma}{4}}}e^{2\rt{An}}.
\end{equation*}
By Lemma~\ref{d}, as $n\rightarrow\infty$, we have that 
\begin{equation*}
p(dn)_{\textbf{e}}\sim\frac{\lambda A^{\frac{1+\gamma}{4}}}{2\rt{\pi}n^{\frac{3+\gamma}{4}}}e^{2\rt{An}}.
\end{equation*}
We thus obtain an asymptotic for $p(dn)_{\textbf{e}}$.
\end{proof}

\begin{example}
Let $\textbf{e}:=(1,0,1)$.  Then $d=1$, $\gamma=2$, and $\delta=\frac{4}{3}$, so $A=\frac{2\pi^2}{9}$ and $\lambda=\frac{\rt{3}}{2\pi}$.  Then by Theorem~\ref{as}, we have that
\begin{equation*}
p(n)_{\textbf{e}}\sim P(n)_{\textbf{e}},
\end{equation*}
where 
\begin{equation*}
P(n)_{\textbf{e}}:=\frac{1}{6\cdot2^{\frac{1}{4}}n^{\frac{5}{4}}}e^{\frac{2\pi\rt{2n}}{3}}.
\end{equation*} 
Below we display the first 10000
values of $p(n)_{\textbf{e}}$ and $P(n)_{\textbf{e}}$ (computed in Mathematica).  As $n\rightarrow\infty$, we observe that the ratio $p(n)_{\textbf{e}}/P(n)_{\textbf{e}}$ approaches 1.
\begin{table}[h!]
\caption{Ratio of $p(n)_{\textbf{e}}$ and $P(n)_{\textbf{e}}$}
\begin{center}
\begin{tabular}{ |c|c|c|c| }
\hline
$n$ & $p(n)_{\textbf{e}}$ & $P(n)_{\textbf{e}}$ & $p(n)_{\textbf{e}}/P(n)_{\textbf{e}}$\\
\hline
1000 & $1.155\cdot 10^{36}$ & $1.187\cdot 10^{36}$ & 0.97266\\
2000 & $3.459\cdot 10^{52}$ & $3.527\cdot 10^{52}$ & 0.98057\\
3000 & $1.775\cdot 10^{65}$ & $1.804\cdot 10^{65}$ & 0.98410\\
4000 & $9.855\cdot 10^{75}$ & $9,993\cdot 10^{75}$ & 0.98621\\
5000 & $2.992\cdot 10^{85}$ & $3.029\cdot 10^{85}$ &  0.98765\\
6000 & $1.145\cdot 10^{94}$ & $1.158\cdot 10^{94}$ & 0.98872\\
7000 & $9.106\cdot 10^{101}$ & $9.198\cdot 10^{101}$ & 0.98955\\
8000 & $2.079\cdot 10^{109}$ & $2.099\cdot 10^{109}$ & 0.99022\\
9000 & $1.711\cdot 10^{116}$ & $1.727\cdot 10^{116}$ & 0.99078\\
10000 & $5.990\cdot 10^{122}$ & $6.042\cdot 10^{122}$ & 0.99125\\
\hline
\end{tabular}
\end{center}
\end{table}
\end{example}
\bigskip

\section{An Algorithm for the Vector $\textbf{c}_e$}

We now establish an algorithm used to confirm or refute alleged generalized Ramanujan congruences.  Define $\alpha$ by~(\ref{alpha}).  Given a prime $\ell\geq 2$ where if $\ell=2$ or 3, $\alpha\equiv 0\pmod\ell$, and a vector $\textbf{e}:=(e_1,e_2,\ldots,e_k)\in\Z^k$ with $-\ell+1\leq e_m\leq 0$, we must construct a vector $\textbf{c}_e$ so that $\textbf{e}'=\textbf{e}-\ell\textbf{c}_e$ satisfies the following conditions:
\begin{enumerate}[label=(\roman*)]
\item $e_m'\leq 0$ for all $m$,
\item $\omega\in\Z$,
\item $w\in\Z$, and 
\item $\sum_{m=1}^{k}\frac{N}{m}e_m'\equiv 0\pmod{24}$
\end{enumerate}
where $w$, $\omega$, and $N$ are defined by~(\ref{w}),~(\ref{omega}), and~(\ref{N}), respectively.

\begin{prop}\label{cond}
Assume the notation above.  Given a prime $\ell\geq 2$ where if $\ell=2$ or $3$, $\alpha\equiv 0\pmod\ell$, and a vector $\textbf{e}:=(e_1,e_2,\ldots,e_k)\in\Z^k$ with $-\ell+1\leq e_m\leq 0$, it is possible to construct a vector $\textbf{c}_e$ such that the above conditions are satisfied.
\end{prop}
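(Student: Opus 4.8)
The plan is to note first that conditions (i)--(iv) are exactly the hypotheses one needs in order to apply Theorem~\ref{mod} to the eta-quotient $\prod_{m}\eta(mz)^{-e_m'}$, but that for the proposition itself it suffices to exhibit \emph{one} vector $\textbf{c}_e$ of nonnegative integers making them hold. The crucial point is that (i) and (iv) hold automatically, whatever $\textbf{c}_e$ is, so that only (ii) and (iii) constrain the choice, and these amount to two elementary congruences.

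Condition (i) is immediate, since $e_m\le 0$, $c_m\ge 0$ and $\ell>0$ give $e_m'=e_m-\ell c_m\le e_m\le 0$. For (iv), I would set $N_0=\lcm\{m:e_m'\ne 0\}$ and $T:=\sum_{m=1}^{k}\frac{N_0}{m}e_m'\in\Z$ (an integer because $m\mid N_0$ whenever $e_m'\ne 0$). By~(\ref{N}) one has $N=24N_0\gcd(24,T)^{-1}$, hence $\gcd(24,T)\mid 24$, $N_0\mid N$, and $\frac{N}{m}=\frac{24}{\gcd(24,T)}\cdot\frac{N_0}{m}$ on the support of $\textbf{e}'$, so that
\begin{equation*}
\sum_{m=1}^{k}\frac{N}{m}e_m'=\frac{24}{\gcd(24,T)}\sum_{m=1}^{k}\frac{N_0}{m}e_m'=24\cdot\frac{T}{\gcd(24,T)}\equiv 0\pmod{24}.
\end{equation*}
Thus (iv) holds no matter how $\textbf{c}_e$ is selected.

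It then remains to achieve (ii), $24\mid\sum_{m}me_m'$, and (iii), $2\mid\sum_{m}e_m'$. Writing $\alpha=\sum_m me_m$ as in~(\ref{alpha}) and $\sigma:=\sum_m e_m$, and substituting $e_m'=e_m-\ell c_m$, these become $\ell\sum_{m}mc_m\equiv\alpha\pmod{24}$ and $\ell\sum_{m}c_m\equiv\sigma\pmod{2}$. For $\ell\ge 5$ one has $\gcd(\ell,24)=1$, so this pair is solvable with $c_m\ge 0$: concretely I would take $c_m=0$ for $m\ge 3$, so the left-hand sides are functions of $c_1+2c_2$ and $c_1+c_2$, choose $c_2\in\{0,1\}$ to correct the parity, and let $c_1$ be the least nonnegative residue handling the condition mod $24$; a direct check shows the two choices are then compatible. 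If $k=1$ this simplifies, as (ii) then reads $24\mid e_1'$, which already forces (iii). For $\ell\in\{2,3\}$ the hypothesis $\alpha\equiv 0\pmod\ell$ enters precisely here: since now $\gcd(\ell,24)=\ell$, it is exactly the condition making $\ell\sum_{m}mc_m\equiv\alpha\pmod{24}$ solvable, and (iii) is handled by the same manipulation.

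I expect the only real content to be the identity displayed above: once one spots that the definition~(\ref{N}) of $N$ is arranged so that $\sum_m\frac{N}{m}e_m'$ is visibly a multiple of $24$, conditions (i) and (iv) cost nothing, and what is left is the elementary bookkeeping of solving (ii) and (iii) simultaneously over the nonnegative integers, together with the degenerate cases $k=1$ and $\ell\in\{2,3\}$, where the hypothesis $\alpha\equiv 0\pmod\ell$ is used.
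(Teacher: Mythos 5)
Your argument is correct for $\ell\ge 5$ and for $\ell=3$, and it reaches the conclusion by a cleaner route than the paper. The paper first fixes a target value $\beta_{\textbf{e}}\equiv\ell^{-1}\alpha\pmod{24}$, builds a vector $\textbf{c}_e'$ by a greedy recursion so that $\sum_m mc_m'=\beta_{\textbf{e}}$ (giving (ii)), and then repairs the parity of $\sum_m c_m$ (giving (iii)) by a two-case redistribution that moves weight from some index $j$ onto the index $1$ while preserving $\sum_m mc_m$. You instead observe that (i) and (iv) are automatic --- your verification of (iv) directly from the definition~(\ref{N}) is a computation the paper merely asserts --- and that (ii) and (iii) are exactly the pair of congruences $\ell\sum_m mc_m\equiv\alpha\pmod{24}$ and $\ell\sum_m c_m\equiv\sum_m e_m\pmod{2}$, which you solve using $c_1,c_2$ alone. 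This is the same underlying idea (put $\sum_m mc_m$ in the right class mod $24$ and $\sum_m c_m$ in the right class mod $2$) with less machinery; the only cost is that your $\textbf{c}_e$ may enlarge the support of $\textbf{e}'$ at $m=1,2$, which changes $N_0$, $N$ and hence the size of the bound $K_{\textbf{e}}$ used later, though not the truth of the proposition (the paper's parity repair also increments $c_1$, so it shares this feature).

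One point deserves honesty: for $\ell=2$ your phrase ``(iii) is handled by the same manipulation'' hides a genuine obstruction. There the second congruence reads $0\equiv\sum_m e_m\pmod{2}$, which is simply false when $\sum_m e_m$ is odd (e.g.\ $\textbf{e}=(0,-1)$ with $\ell=2$ satisfies $\alpha\equiv 0\pmod{2}$ but has $\sum_m e_m=-1$); no choice of $\textbf{c}_e$ can repair the parity of $\sum_m e_m'$ because $\ell\sum_m c_m$ is always even. The paper's own proof has the identical gap: its parity-repair step changes $\sum_m e_m'$ by $\ell$ times an odd integer, which is even when $\ell=2$. Since every congruence actually treated in the paper has $\ell\ge 3$, this does not affect Corollary~\ref{cor}, but a complete statement for $\ell=2$ would require the additional hypothesis $\sum_m e_m\equiv 0\pmod{2}$ (or restricting to odd $\ell$).
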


\begin{proof}
First define $\alpha$ by~(\ref{alpha}).  Then define
\begin{equation}\label{beta'}
\beta_{\textbf{e}}:=\begin{cases}
\min\{n\in\N:n\equiv\ell^{-1}\alpha\pmod{24}\text{ and }n>1\}& \ell\nmid 24\\
\min\{n\in\N:n\equiv\ell^{-1}\alpha\pmod{\frac{24}{\ell}}\text{ and }n>1\}& \ell\mid 24
\end{cases}
\end{equation}
where in the first case, $\ell^{-1}$ is taken as the multiplicative inverse of $\ell\pmod{24}$, and in the second case, since $\ell\mid\alpha$, $\ell^{-1}=\frac{1}{\ell}$.

Define $c_m'=0$ if $e_m=0$.  We now define the vector $\textbf{c}_e'$ recursively beginning with $c_k'$ as follows for all $e_m\neq 0$:
\begin{equation}\label{c_m'}
c_m'=\left\lfloor{\frac{1}{m}\left(\beta_{\textbf{e}}-\ds\sum_{n=m+1}^{k}nc_n'\right)}\right\rfloor.
\end{equation}
Note that $c_1'=\beta_{\textbf{e}}-\sum_{n=2}^{k}nc_n'$, so $\sum_{m=1}^{k}mc_m'=\beta_{\textbf{e}}$ and
\begin{align*}
\sum_{m=1}^{k}me'_m&=\sum_{m=1}^{k}me_m-\ell\ds\sum_{m=1}^{k}mc_m\\
&=\alpha-\ell\beta_{\textbf{e}}\\
&\equiv 0\pmod{24},
\end{align*}
so condition (ii) is satisfied. If $\frac{1}{2}\sum_{m=1}^{k}(e_m-\ell c_m')\in\Z$, then define $\textbf{c}_e=\textbf{c}_e'$.  

Suppose $\frac{1}{2}\sum_{m=1}^{k}(e_m-\ell c_m')\not\in\Z$.  Then choose the smallest $j$ such that $j$ is even and $c_j'>0$.  Define $c_j:=c_j'-1$ and $c_1:=c_1'+j$.  For all other $m$, let $c_m:=c_m'$.  Let $\textbf{c}_e=(c_1,c_2,\ldots,c_k)$ and define $\textbf{e}'=\textbf{e}-\ell\textbf{c}_e$.  Then $\sum_{m=1}^{k}mc_m=\sum_{m=1}^{k}mc_m'$ and $e_m'\leq 0$ for all $m$, so conditions (i)-(ii) hold.  Since $\sum_{m=1}^{k}c_m=\sum_{m=1}^{k}c_m'-1+j$ and $-1+j$ is odd, the parity of the sum $\sum_{m=1}^{k}e_m'=\sum_{m=1}^{k}e_m-\ell\sum_{m=1}^{k}c_m$ changes and $w=-\frac{1}{2}\sum_{m=1}^{k}(e_m-\ell c_m')\in\Z$.

Suppose $c_j'=0$ for all $j$ even.  Then choose the smallest $j\neq 1$ such that $j$ is odd and $c_j>0$.  Define $c_j=c_j'-1$, $c_{j-1}=c_{j-1}'+1$, and $c_1=c_1'+1$.  For all other $m$, let $c_m=c_m'$.  Let $\textbf{c}_e=(c_1,c_2,\ldots,c_k)$ and define $\textbf{e}'=\textbf{e}-\ell\textbf{c}_e$.  Then, as before, $\sum_{m=1}^{k}mc_m=\sum_{m=1}^{k}mc_m'$ and $e_m'\leq 0$ for all $m$, so conditions (i)-(ii) hold.  Since $\sum_{m=1}^{k}c_m=\sum_{m=1}^{k}c_m'+1$, the parity of the sum $\sum_{m=1}^{k}e_m'$ changes and $w\in\Z$.

Note that by the definition of $N$ in~(\ref{N}), $\sum_{m=1}^{k}\frac{N}{m}e_m'\equiv 0\pmod{24}$, so condition (iv) holds.  Thus the vector $\textbf{e}'$ satisfies conditions (i)-(iv) as desired.
\end{proof}

\section{Generalized Ramanujan Congruences}
We now use the algorithm in Proposition~\ref{cond} to prove Theorems~\ref{type1} and~\ref{type2} and establish a method of confirming or refuting alleged generalized Ramanujan congruences that fall into two different types.
First note the following fact from~\cite{bacher}:
\begin{prop}\label{e'}
Consider a vector $\textbf{e}:=(e_1,e_2,\ldots,e_k)\in\Z^k$, an arithmetic progression $(An+B)_{n\geq 0}$ with $A\geq 2$ and $1\leq B\leq A-1$, a prime $\ell$, and another vector $\textbf{e}'=(e_1',e_2',\ldots,e_k')\in\Z^k$.  Assume that $e_m'\equiv e_m\pmod{\ell}$ for all $m\geq 0$.  Then $p(An+B)_{\textbf{e}}\equiv 0\pmod{\ell}$ for all $n\geq 0$ if and only if $p(An+B)_{\textbf{e}'}\equiv 0\pmod{\ell}$ for all $n\geq 0$.  
\end{prop}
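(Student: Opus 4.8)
The plan is to compare the two generating series modulo $\ell$ using the Frobenius identity for formal power series; the argument is entirely elementary and uses no modular forms. Write $F(q):=\sum_{n\ge 0}p(n)_{\textbf{e}}q^n$ and $F'(q):=\sum_{n\ge 0}p(n)_{\textbf{e}'}q^n$; both lie in $\Z[[q]]$ and have constant term $1$, hence are units. By the defining product their quotient is
\[
\frac{F(q)}{F'(q)}=\prod_{m=1}^{k}\prod_{n=1}^{\infty}(1-q^{mn})^{-(e_m-e_m')}=\prod_{m=1}^{k}\prod_{n=1}^{\infty}(1-q^{mn})^{-\ell c_m},
\]
where $c_m:=(e_m-e_m')/\ell\in\Z$ by the hypothesis $e_m'\equiv e_m\pmod{\ell}$.

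The first step is to reduce this quotient modulo $\ell$. In $\mathbb{F}_\ell[[q]]$ one has $(1-q^{j})^{\ell}=1-q^{j\ell}$ for every $j\ge 1$, and since $1-q^{j\ell}$ is a unit this persists with $\ell$ replaced by $-\ell$. Applying this to each factor gives $F(q)/F'(q)\equiv G(q^{\ell})\pmod{\ell}$, where $G(t):=\prod_{m=1}^{k}\prod_{n=1}^{\infty}(1-t^{mn})^{-c_m}\in\Z[[t]]$ has constant term $1$ and hence is invertible; put $H(t):=G(t)^{-1}\in\Z[[t]]$. We thus obtain the pair of congruences
\[
F(q)\equiv G(q^{\ell})F'(q)\pmod{\ell},\qquad F'(q)\equiv H(q^{\ell})F(q)\pmod{\ell},
\]
which are symmetric under interchanging $(\textbf{e},\textbf{e}')$, so it suffices to prove one implication.

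The second step is to sieve onto the arithmetic progression. Write $G(q^{\ell})=\sum_{j\ge 0}g_jq^{\ell j}$ with $g_j\in\Z$, $g_0=1$. Since $A=\ell$ in all the cases in which this proposition is invoked (Theorems~\ref{type1} and~\ref{type2}), multiplication by $q^{\ell j}$ carries the progression $\{\ell n+B:n\ge 0\}$ into itself, so extracting the coefficient of $q^{\ell n+B}$ from $F\equiv G(q^{\ell})F'$ yields
\[
p(\ell n+B)_{\textbf{e}}\equiv\sum_{j=0}^{n}g_j\,p\bigl(\ell(n-j)+B\bigr)_{\textbf{e}'}\pmod{\ell}
\]
for all $n\ge 0$. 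Hence if $p(\ell m+B)_{\textbf{e}'}\equiv 0\pmod{\ell}$ for all $m\ge 0$ then $p(\ell n+B)_{\textbf{e}}\equiv 0\pmod{\ell}$ for all $n\ge 0$, and the reverse implication follows by the same computation with $H$ in place of $G$.

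The only points requiring care are formal ones. The first is the Frobenius reduction for the products $\prod_{m,n}(1-q^{mn})^{-\ell c_m}$ when some $c_m$ is negative; this is handled by working in $\mathbb{F}_\ell[[q]]$ and using that a power series with unit constant term is invertible, so that $(1-q^{j})^{-\ell}=(1-q^{j\ell})^{-1}$ there. The second is the bookkeeping of the arithmetic progression when multiplying by the $q^{\ell}$-series $G(q^{\ell})$; this is clean precisely because the modulus of the progression equals $\ell$, which is the situation relevant to the applications. I do not expect any genuine obstacle beyond these: once the two congruences above are in hand, the equivalence is immediate.
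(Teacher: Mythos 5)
Your Frobenius argument is sound, and it is worth saying plainly that the paper itself gives no proof of this proposition at all: it is quoted as a fact from~\cite{bacher}, so you are supplying an argument where the authors supply a citation. The key identity $(1-q^{j})^{\pm\ell}\equiv(1-q^{j\ell})^{\pm 1}\pmod{\ell}$ is valid for every prime, including $\ell=2$ (since $1+q^{2j}\equiv 1-q^{2j}\pmod 2$), the infinite products converge $q$-adically so the reduction is legitimate factor by factor, and the resulting pair of congruences $F\equiv G(q^{\ell})F'$ and $F'\equiv G(q^{\ell})^{-1}F$ with $G\in\Z[[t]]$ a unit does give both implications at once. The coefficient extraction along $\ell n+B$ is also correct, including the boundary check that only indices $0\le j\le n$ contribute.

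The one thing you should not leave implicit is that your restriction to $A=\ell$ is a necessity, not a convenience: the proposition as literally stated, for an arbitrary modulus $A\ge 2$, is false. Take $\ell=5$, $\textbf{e}=(0,1)$, $\textbf{e}'=(5,1)$, and the progression $2n+1$. Then $p(2n+1)_{(0,1)}=0$ for every $n$, since that generating function is supported on even exponents, whereas $p(5)_{(5,1)}=506+65+10=581\equiv 1\pmod 5$ and $5=2\cdot 2+1$ lies in the progression. The obstruction is exactly the one your own computation isolates: multiplying by the $q^{\ell}$-series $G(q^{\ell})$ preserves residue classes modulo $A$ only when $A\mid\ell j$ for all contributing $j$, which forces $A=\ell$. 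Since the paper only ever invokes the proposition with $A=\ell$ (in the proofs of Theorems~\ref{type1} and~\ref{type2}), your proof covers everything that is actually used, but you should record the statement you have proved with the hypothesis $A=\ell$ added rather than presenting it as a proof of the proposition as written.
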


\subsection{Proof of Theorem~\ref{type1}}
By Proposition~\ref{e'}, it suffices to consider vectors $\textbf{e}=(e_1,e_2,\ldots,e_k)$ with $-\ell+1\leq e_m\leq 0$ for all $m$.  Define $\textbf{e}'=\textbf{e}-\ell\textbf{c}_e$ by Proposition~\ref{cond}.  Then since $e'_m\equiv e_m\pmod{\ell}$ for all $m\geq 0$, by Proposition~\ref{e'}, it is enough to show that $p(\ell n+\delta_{\ell})_{\textbf{e}'}\equiv 0\pmod{\ell}$ for all $n\geq 0$.  Note that
\begin{align*}
\ds\sum_{n=0}^{\infty}p(n)_{\textbf{e}'}q^n&=\ds\prod_{m=1}^{k}\ds\prod_{n=1}^{\infty}\frac{1}{(1-q^{mn})^{e'_m}}\\
&=q^{\omega}\ds\prod_{m\mid N}\eta(mz)^{-e'_m}\\
&=:q^{\omega}g(z),
\end{align*}
where $\omega$ is defined by~(\ref{omega}).  Note that $\omega\equiv\delta_{\ell}\pmod{\ell}$.

Now, note that $g(z)$ has weight $w=-\frac{1}{2}\sum_{m=1}^{k}e_m'$.  By condition (iii), $w$ must be an integer.  Furthermore, based on our choices of $\textbf{c}_e$ and $N$, $\textbf{e}'$ satisfies conditions (ii) and (iv), which are the necessary conditions of Theorem~\ref{mod}.  Since $g(z)$ is additionally holomorphic at all the cusps of $\Gamma_0(N)$, $g(z)$ is a modular form in the space $M_w(\Gamma_0(N),\chi)$.  We can therefore write its Fourier expansion
\begin{align*}
g(z):=\ds\sum_{n=0}^{\infty}b(n)q^n.
\end{align*}
Then $p(\ell n+\omega)_{\textbf{e}'}\equiv 0\pmod{\ell}$ for all $n\geq 0$ if and only if $b(\ell n)\equiv 0\pmod{\ell}$ for all $n\geq 0$.  Note that $p(\ell n+\omega)_{\textbf{e}'}\equiv 0\pmod{\ell}$ for all $n\geq 0$ if and only if $p(\ell n+\delta_{\ell})_{\textbf{e}'}\equiv 0\pmod{\ell}$ for all $n\geq \frac{\omega-\delta_{\ell}}{\ell}$ since $\omega\equiv\delta_{\ell}\pmod{\ell}$.  Therefore, we obtain that $p(\ell n+\delta_{\ell})_{\textbf{e}'}\equiv 0\pmod{\ell}$ for all $n\geq \frac{\omega-\delta_{\ell}}{\ell}$ if and only if $b(\ell n)\equiv 0\pmod{\ell}$ for all $n\geq 0$.

We now act on $g(z)$ with the Hecke operator $T_{\ell,w,\chi}$ and define
\begin{align*}
f(z)&:=g(z)\mid T_{\ell,w,\chi}\\
&=\ds\sum_{n=0}^{\infty}(b(\ell n)+\chi(\ell)\ell^{w-1}b(n/\ell))q^n.
\end{align*}
By Proposition~\ref{Hecke}, the function $f(z)$ is a modular form in the space $M_w(\Gamma_0(N),\chi)$, so we can write its Fourier series expansion as
\begin{equation*}
f(z):=\ds\sum_{n=0}^{\infty}a(n)q^n.
\end{equation*}
Then we observe that $a(n)=b(\ell n)+\chi(\ell)\ell^{w-1}b(n/\ell)$, so $a(n)\equiv b(\ell n)\pmod{\ell}$ for all $n\geq 0$.  Thus we have that $b(\ell n)\equiv 0\pmod{\ell}$ for all $n\geq 0$ if and only if $a(n)\equiv 0\pmod{\ell}$ for all $n\geq 0$.  

Since $f(z)$ has weight $w$ and is a level $N$ modular form, by Theorem~\ref{sturm}, $a(n)\equiv 0\pmod{\ell}$ for all $n\geq 0$ if and only if $a(n)\equiv 0\pmod{\ell}$ for all $0\leq n\leq\frac{w}{12}[\Gamma_0(1):\Gamma_0(N)]$.  Then we have that $p(\ell n+\delta_{\ell})_{\textbf{e}}\equiv 0\pmod{\ell}$ for all $n\geq\frac{\omega-\delta_{\ell}}{\ell}$ if and only if $p(\ell n+\delta_{\ell})_{\textbf{e}}\equiv 0\pmod{\ell}$ for all $\frac{\omega-\delta_{\ell}}{\ell}\leq n\leq \frac{\omega-\delta_{\ell}}{\ell}+\frac{w}{12}[\Gamma_0(1):\Gamma_0(N)]$.  Therefore, we obtain that $p(\ell n+\delta_{\ell})_{\textbf{e}}\equiv 0\pmod{\ell}$ for all $n\geq 0$ if and only if $p(\ell n+\delta_{\ell})_{\textbf{e}}\equiv 0\pmod{\ell}$ for all $0\leq n\leq \frac{\omega-\delta_{\ell}}{\ell}+\frac{w}{12}[\Gamma_0(1):\Gamma_0(N)]$.  By Proposition~\ref{dim}, we have that
\begin{equation*}
[\Gamma_0(1):\Gamma_0(N)]=N\prod_{p\mid N}\left(1+\frac{1}{p}\right),
\end{equation*}
so by our definition of $K_{\textbf{e}}$ in~(\ref{K}), we obtain $p(\ell n+\delta_{\ell})_{\textbf{e}}\equiv 0\pmod{\ell}$ for all $n\geq 0$ if and only if $p(\ell n+\delta_{\ell})_{\textbf{e}}\equiv 0\pmod{\ell}$ for all $0\leq n\leq K_{\textbf{e}}$.
\qed

\subsection{Proof of Theorem~\ref{type2}}

As in the proof of Theorem~\ref{type1}, by Proposition~\ref{e'}, it suffices to consider vectors $\textbf{e}=(e_1,e_2,\ldots,e_k)$ with $-\ell+1\leq e_m\leq 0$ for all $m$.  Define $\textbf{e}'$ using Proposition~\ref{cond}.  Again, let $g(z)=\prod_{m\mid N}\eta(mz)^{-e'_m}=\sum_{n=0}^{\infty}b(n)q^n$.  Then, as in the previous proof, we have that $p(\ell n+\gamma_{\ell})\equiv 0\pmod{\ell}$ for all $n\geq \frac{\omega-\delta_{\ell}}{\ell}$ if and only if $b(\ell n+\gamma_{\ell}-\omega)\equiv 0\pmod{\ell}$ for all $\geq \frac{\omega-\delta_{\ell}}{\ell}$, which holds if and only if $b(\ell n+\gamma_{\ell}-\delta_{\ell})\equiv 0\pmod{\ell}$ for all $n\geq 0$.

Define the following Dirichlet characters:
\begin{equation*}
\psi_0(n):=\begin{cases} 
      1 & \gcd(n,\ell)=1\\
      0 & \text{otherwise} 
   \end{cases}
\end{equation*}
and
\begin{equation*}
\psi_1(n):=\left(\frac{n}{\ell}\right).
\end{equation*}
Note that $\psi_0^2(n)$ and $\psi_1^2(n)$ both yield the trivial character.  Now define
\begin{equation*}
G(z):=g_{\psi_0}(z)=\ds\sum_{\ell\nmid n}b(n)q^n
\end{equation*}
and
\begin{equation*}
G_{\psi_1}(z)=\ds\sum_{\ell\nmid n}\left(\frac{n}{\ell}\right)b(n)q^n.
\end{equation*}
By Proposition~\ref{twist}, we have that $G(z)$ and $G_{\psi_1}(z)$ are both modular forms; in particular, since $\psi_0^2$ and $\psi_1^2$ are both trivial, we have that $G(z), G_{\psi_1}(z)\in M_w(\Gamma_0(N\ell^2),\chi)$.  Now define
\begin{equation*}
H_{+}(z):=\frac{G(z)+G_{\psi_1}(z)}{2}=\ds\sum_{\left(\frac{n}{\ell}\right)=1}b(n)q^n
\end{equation*}
and
\begin{equation*}
H_{-}(z):=\frac{G(z)-G_{\psi_1}(z)}{2}=\ds\sum_{\left(\frac{n}{\ell}\right)=-1}b(n)q^n.
\end{equation*}
Then we observe that $H_{\pm}(z)\in M_w(\Gamma_0(N\ell^2),\chi)$.  Recalling our definitions of the sets $S_{\pm}$ in~(\ref{S1}) and~(\ref{S-1}), note that 
\begin{equation*}
H_{\pm}(z)=\sum_{\substack{n\equiv\gamma_{\ell}+\delta_{\ell}\\(\text{mod  }\ell)\\ \gamma_{\ell}\in S_{\pm}}}b(n)q^n.
\end{equation*}
Now, we can write the Fourier expansion of $H_{\pm}(z)$ as 
\begin{equation*}
H_{\pm}(z):=\sum_{n=0}^{\infty}a_{\pm}(n)q^n.
\end{equation*}
Since $a_{+}(n)$ is only supported where $n\equiv\gamma_{\ell}+\delta_{\ell}\pmod{\ell}$ where $\gamma_{\ell}\in S_{+}$, we have that $b(\ell n+\gamma_{\ell}-\delta_{\ell})\equiv 0\pmod{\ell}$ for all $n\geq 0$ and for all $\gamma_{\ell}\in S_{+}$ if and only if $a_{+}(n)\equiv 0\pmod{\ell}$ for all $n\geq 0$.  By Theorem~\ref{sturm}, $a_{+}(n)\equiv 0\pmod{\ell}$ for all $n\geq 0$ if and only if $a_{+}(n)\equiv 0\pmod{\ell}$ for all $0\leq n\leq \frac{w}{12}[\Gamma_0(1):\Gamma_0(N\ell^2)]$.  As $a_{+}(n)\equiv 0\pmod{\ell}$ for all $0\leq n\leq \frac{w}{12}[\Gamma_0(1):\Gamma_0(N\ell^2)]$ if and only if $p(\ell n+\gamma_{\ell})_{\textbf{e}}\equiv 0\pmod{\ell}$ for all $\frac{\omega-\delta_{\ell}}{\ell}\leq n\leq \frac{\omega-\delta_{\ell}}{\ell}+\frac{w}{12}[\Gamma_0(1):\Gamma_0(N\ell^2)]$ and for all $\gamma_{\ell}\in S_{+}$, we obtain that $p(\ell n+\gamma_{\ell})_{\textbf{e}}\equiv 0\pmod{\ell}$ for all $n\geq \frac{\omega-\delta_{\ell}}{\ell}$ and for all $\gamma_{\ell}\in S_{+}$ if and only if $p(\ell n+\gamma_{\ell})_{\textbf{e}}\equiv 0\pmod{\ell}$ for all $\frac{\omega-\delta_{\ell}}{\ell}\leq n\leq \frac{\omega-\delta_{\ell}}{\ell}+\frac{w}{12}[\Gamma_0(1):\Gamma_0(N\ell^2)]$ and for all $\gamma_{\ell}\in S_{+}$.  

By our definition of $K_{\textbf{e}}'$ in~(\ref{K'}), we have that $p(\ell n+\gamma_{\ell})_{\textbf{e}}\equiv 0\pmod{\ell}$ for all $n\geq 0$ and for all $\gamma_{\ell}\in S_{+}$ if and only if $p(\ell n+\gamma_{\ell})_{\textbf{e}}\equiv 0\pmod{\ell}$ for all $0\leq n\leq K_{\textbf{e}}'$ and for all $\gamma_{\ell}\in S_{+}$.  Thus the theorem holds for $\gamma_{\ell}\in S_{+}$.  Replacing $S_{+}$ by $S_{-}$, $a_{+}(n)$ by $a_{-}(n)$, and $H_{+}(z)$ by $H_{-}(z)$, the same argument works for $\gamma_{\ell}\in S_{-}$.
\qed

\section{Examples of Congruences}

Given an alleged congruence of the form $p(\ell n+B)_\textbf{e}\equiv 0\pmod{\ell}$ that falls into either the Theorem~\ref{type1} or Theorem~\ref{type2} case, we can use the finite algorithm in Section 3 and Theorems~\ref{type1} and~\ref{type2} either to confirm or refute it.  We first use the algorithm to determine $K_{\textbf{e}}$ and $K'_{\textbf{e}}$.  By Theorems~\ref{type1} and~\ref{type2}, it suffices to check numerically that the conjectured congruences hold for all $0\leq n\leq K_{\textbf{e}}$ or $K'_{\textbf{e}}$ respectively.

\begin{example}
We have that $p(5n+2)_{(2,0,0,4)}\equiv 0\pmod{5}$ for all $n\geq 0$, as conjectured by~\cite{bacher}.
\end{example}

\begin{proof}
Note that $\alpha=18$, so $\delta_{\ell}=2$; this is an example of the Theorem~\ref{type1} case.  Using our algorithm, we have $\textbf{c}_e=(2,0,0,4)$, so $\textbf{e}'=(-8,0,0,-16)$.  Then $w=12$ and $N=4$, so $K_\textbf{e}=6$.  Computing the first 7 values of $p(5n+2)_{(2,0,0,4)}$, we find that they are equivalent to $0\pmod{5}$.  Thus the congruence holds.
\end{proof}

\begin{example}
We have that 
$p(5n+2)_{(2,0,0,2)}\equiv p(5n+3)_{(2,0,0,2)}\equiv 0\pmod{5}$ for all $n\geq 0$, as conjectured by~\cite{bacher}.
\end{example}

\begin{proof}
Note that $\alpha=10$, so $\delta_{\ell}=0$.  In this case $S_{-1}=\{2,3\}$, so this is an example of the Theorem~\ref{type2} case.  Using our algorithm, we have $\textbf{c}_e=(2,0,0,6)$ so $\textbf{e}'=(-8,0,0,-28)$.  Then $w=18$ and $N=8$, so $K_{\textbf{e}}'=540$.  Computing the first 541 values of $p(5n+2)_{(2,0,0,2)}$ and $p(5n+3)_{(2,0,0,2)}$, we find that they are equivalent to $0\pmod{5}$.  Thus the congruence holds.
\end{proof}

\section{Appendix}
We include here a list of the conjectures from~\cite{bacher}. By Corollary~\ref{cor}, they are all true. 

\subsection{Some examples of the form $p(3n+B)_{\mathbf e} \equiv 0 \pmod 3$}
\begin{flushleft}
$p(3n+2)_{(1,1),(2,1,0,2),(2,1,0,1,2,1_{10},1_{20}),(1,1,0,2,1,1_{10},2_{20})}$.
\end{flushleft}
\subsection{Some examples of the form $p(5n+B)_{\mathbf e}\equiv 0\pmod 5$}
\begin{flushleft}
$p(5n+1)_{(0,2,2),(0,4,2),(0,2,3,0,0,1)}$, \\
$p(5n+2)_{(2),(3,1),(1,3),(1,3,2),(2,0,0,2),(3,1,0,2),
(3,1,0,3),(2,0,0,4),(4,1,0,4),(1,3,4,0,0,1)}$, \\
$p(5n+2)_{(4,1,1,0,0,3),
(4,1,3,0,0,3),(3,1,1,0,0,4),(3,1,3,0,0,4),(2,2_8), (1,3,2_8),
(3,1,0,3,2_8),(4,1,0,4,2_8)}$, \\
$p(5n+3)_{(2),(4),(3,1),(1,2,0,1),(2,0,0,2),(4,0,0,2), 
(3,1,0,3),(1,2,0,3),(3,1,0,1,1_8),(2,0,0,3,1_8),(1,1,1,0,0,1)}$, \\ 
$p(5n+3)_{(1,4,3,0,0,1),(1,3,4,0,0,1),(3,3,4,0,0,1),(3,1,0,0,0,2),(2,3,4,0,0,2),(4,2,2,0,0,3),(3,2,2,0,0,4)}$, \\
$p(5n+4)_{(1),(2),(4),(2,2),(1,3),(0,2,2),(0,2,4),(1,2,0,1),(3,2,0,1),(2,1,0,3),(3,1,0,3),(3,3,0,3)}$,\\
$p(5n+4)_{(4,1,0,4),(4,3,0,4),(1,4,3,0,0,1),(3,4,3,0,0,1),(2,4,3,0,0,2),(4,1,1,0,0,3),(4,3,1,0,0,3),(1,4,3,0,0,3)}$, \\
$p(5n+4)_{(3,1,1,0,0,4),(3,3,1,0,0,4),(4,4,3_8),(1,1,0,1,3_8),(2,3,0,1,3_8),(3,4,0,4,3_8),(2,4,0,1,4_8),(3,0,0,4,4_8)}$.
\end{flushleft}   
\subsection{Some examples of the form $p(7n+B)_{\mathbf e}\equiv 0\pmod 7$}
\begin{flushleft}
$p(7n+2)_{(4),(2,2),(1,5),(3,5),(6,1,0,3),(3,5,0,3),(4,0,0,4),(1,5,0,4),(5,1,0,5),(6,1,0,6)}$,\\
$p(7n+3)_{(6),(5,1),(2,2),(1,4,0,1),(2,2,0,2),(5,1,0,4),(5,1,0,5),(2,2,0,6)}$,\\
$p(7n+4)_{(4),(6),(1,2),(2,2),(4,4),(1,5),(1,4,0,1), (3,6,0,1),(3,2,0,3),(3,5,0,3)}$,\\
$p(7n+4)_{(4,1,0,5),(5,1,0,5),(6,1,0,6),(6,5,0,6)}$,\\
$p(7n+5)_{(1),(4),(5,1),(1,5),(5,5),(2,2,0,2),(2,6,0,2),(4,3,0,3),(3,5,0,3),(3,1,0,6),(6,1,0,6),(6,3,0,6)}$,\\ 
$p(7n+6)_{(4),(6),(2,1),(5,1),(2,2),(5,3),(1,4,0,1),(4,5,0,1),(2,2,0,2),(6,2,0,2),(2,4,0,2)}$, \\
$p(7n+6)_{(3,5,0,3),(1,6,0,3),(3,3,0,4),(5,0,0,5),(5,1,0,5)}$. \\
\end{flushleft} 
\subsection{Some examples of the form $p(11n+B)_{\mathbf e}\equiv 0\pmod{11}$}
\begin{flushleft} 
$p(11n+2)_{(8),(9,1),(2,6),(1,9),(3,2,0,2),(2,6,0,2),(6,6,0,2),(3,2,0,3),(5,2,0,7),(9,1,0,9),(8,0,0,10),(10,1,0,10)}$,\\
$p(11n+3)_{(10),(4,1),(6,2),(2,6),(1,8,0,1),(5,9,0,4),(5,9,0,5),(7,2,0,7),(9,1,0,9),(6,2,0,10)}$,\\
$p(11n+4)_{(8),(2,3),(2,6),(8,9,0,1),(3,2,0,3),(3,0,0,4),(9,2,0,7),(9,1,0,9),(2,7,0,9),(9,9,0,9)}$, \\
$p(11n+5)_{(8),(6,2),(7,7),(1,9),(6,0,0,1),(3,2,0,3),(10,5,0,3),(1,2,0,4),(4,6,0,4),(5,9,0,5),(5,7,0,6),(10,1,0,10)}$, \\
$p(11n+6)_{(1),(10),(9,1),(6,2),(2,5),(2,6),(9,7),(4,2,0,1),(1,8,0,1),(2,1,0,2),(2,6,0,2),(8,7,0,3)}$, \\
$p(11n+6)_{(5,9,0,5),(3,9,0,6),(7,3,0,8),(9,0,0,9),
(9,1,0,9),(10,3,0,10)}$, \\
$p(11n+7)_{(3),(8),(9,1),(2,6),(1,9),(7,9),(4,1,0,2),(2,6,0,2),(3,2,0,3),(6,9,0,3),(4,8,0,4),(10,3,0,5), (1,0,0,6)}$, \\
$p(11n+7)_{(8,2,0,6),(5,5,0,8),(8,9,0,8),(9,1,0,9),(7,2,0,9),(3,4,0,9),(10,1,0,10)}$, \\
$p(11n+8)_{(5),(8),(10),(9,1),(6,2),(8,4),(1,9),(9,9),(1,8,0,1),(2,3,0,2),(2,6,0,2),(4,0,0,3)}$, \\
$p(11n+8)_{(3,2,0,3),(3,6,0,3),(9,1,0,4),(8,5,0,5),(5,9,0,5),(10,2,0,6)}$,\\
$p(11n+8)_{(6,4,0,6),(2,6,0,6),(1,10,0,7),(3,7,0,8),(7,1,0,10),(10,1,0,10)}$, \\
$p(11n+9)_{(7),(8),(10),(3,2),(6,2),(2,6),(6,6),(1,9),(1,8,0,1),(9,8,0,1),(2,2,0,3),(3,2,0,3),(9,4,0,3),(4,10,0,4)}$, \\
$p(11n+9)_{(5,2,0,5),(6,7,0,5),(2,9,0,5),(5,9,0,5),(10,1,0,7),(8,0,0,8),(1,9,0,8),(9,1,0,9),(10,1,0,10),(5,3,0,10)}$, \\
$p(11n+10)_{(10),(9,1),(5,2),(6,2),(1,4),(4,8),(1,8,0,1),(7,10,0,1),(2,6,0,2),(9,7,0,2)} $, \\
$p(11n+10)_{(5,9,0,2),(2,1,0,4),(7,2,0,5),(4,9,0,5),(5,9,0,5),(6,6,0,6),(8,3,0,7)}$ \\
$p(11n+10)_{(7,9,0,7),(10,0,0,8),(6,2,0,8),(4,1,0,9),(1,8,0,9),(3,5,0,10),(10,7,0,10)}$.\\
\end{flushleft} 
\subsection{Some examples of the form $p(13n+B)_{\mathbf e}\equiv 0\pmod{13}$}
\begin{flushleft} 
$p(13n+2)_{(11,1),(2,8),(2,8,0,2),(8,8,0,6),(11,1,0,11),(5,6,0,11)}$, \\
$p(13n+3)_{(12),(8,2),(1,10,0,1),(5,0,0,5),(10,6,0,6),(3,10,0,9)}$,\\
$p(13n+4)_{(10),(12),(8,2),(2,8),(1,11),(2,6,0,1),(1,10,0,1),(3,4,0,3),\dots}$,\\
$p(13n+5)_{(10),(11,1),(1,11),(6,1,0,2),(2,8,0,2),(3,4,0,3),\dots}$,\\
$p(13n+6)_{(12),(11,1),(8,2),(2,8),(1,10,0,1),(2,8,0,2),(8,12,0,2),\dots}$,\\
$p(13n+7)_{(10),(11,1),(8,2),(6,3),(1,11),(2,8,0,2),(10,10,0,2),(3,4,0,3),\dots}$,\\
$p(13n+8)_{(10),(12),(8,1),(11,1),(8,2),(1,10,0,1),(2,8,0,2),(12,8,0,2),(8,10,0,2),\dots}$,\\
$p(13n+9)_{(10),(2,8),(1,11),(10,12),(12,9,0,1),(1,6,0,2),(10,8,0,2),\dots}$,\\
$p(13n+10)_{(12),(8,2),(2,8),(12,10),(8,12),(1,7,0,1),(1,10,0,1),(5,1,0,3),\dots}$,\\
$p(13n+11)_{(10),(12),(11,1),(8,2),(1,8),(10,10),(1,11),(3,5,0,1),(2,8,0,2),\dots}$,\\
$p(13n+12)_{(10),(3,6),(2,8),(12,8),(1,11),(5,3,0,1),(1,5,0,1),(7,0,0,2),\dots}$. \\
\end{flushleft} 

\medskip

\bibliography{references}

\end{document}